\newcommand\blackslug{\hbox{\hskip 1pt \vrule width 4pt height 8pt depth 1.5pt
        \hskip 1pt}}
\newcommand\bbox{\hfill \quad \blackslug \medbreak}
\newtheorem{theorem}{}[section]
\newcommand{\Proof}{\noindent{\bf Proof.}\ \ }
\title{$P_{k}$-freeness implies small dichromatic number}
\author{
Krzysztof Choromanski
Google Research\\
New York, NY, USA
}
\newtheorem{conjecture}{}[section]
\newtheorem{lemma}{}[section]
\newcommand{\Keywords}{$P_{k}$-free tournaments, acyclic colorings, transitive subsets, the Erd\H{o}s-Hajnal Conjecture}
\begin{document}
\maketitle
\begin{abstract}

We propose a purely combinatorial quadratic time algorithm that for any $n$-vertex $P_{k}$-free tournament $T$, where $P_{k}$ is a directed path of length $k$, finds in $T$ a
transitive subset of order $n^{\frac{c}{k\log(k)^{2}}}$. As a byproduct of our method, we obtain subcubic $O(n^{1-\frac{c}{k\log(k)^{2}}})$-approximation algorithm 
for the optimal acyclic coloring problem on $P_{k}$-free tournaments. Our results are tight up to the $\log(k)$-factor in the following sense:
there exist infinite families of $P_{k}$-free tournaments with largest transitive subsets of order at most $n^{\frac{c\log(k)}{k}}$.
As a corollary, we give tight asymptotic results regarding the so-called \textit{Erd\H{o}s-Hajnal coefficients} of directed paths.
These are some of the first asymptotic results on these coefficients for infinite families of prime graphs.

\end{abstract}

\maketitle {\bf Keywords:} \Keywords

\section{Introduction}
Graph coloring problem is of fundamental importance in computer science. 
In the undirected setting the task is to color all the vertices of the graph to use as few colors
as possible and in such a way that every color class induces an independent set. 
The \textit{chromatic number $\chi(G)$ of the undirected graph $G$} is the minimum number of colors that can be used under these constraints.
In the directed setting (\cite{lara}) the coloring needs to be done in such a way that every color class induces an acyclic digraph.
Such a coloring is called an \textit{acyclic coloring}.
In particular, when a graph to color is a tournament then each color class is a transitive subset (transitive subsets correspond in the tournament setting to the independent sets in the undirected one).
The number of colors in the optimal acyclic coloring of a digraph $D$ is called the \textit{dichromatic number $\chi_{a}(D)$ of the digraph $D$}.
Digraph colorings arise in several applications and are thoroughly used in kernel theory and tournament theory thus they attracted attention of many researchers.

The coloring problem is NP-hard but even the stronger statement is true: for every $\epsilon>0$ finding a $n^{1-\epsilon}$-approximation 
of the optimal coloring is NP-hard. Due to its hardness, many research efforts focused on finding good-quality colorings for several special
classes of graphs. For instance, a scenario when a graph $G$ under consideration is $k$-colorable for some $k>0$ was analyzed.
The best known coloring algorithms for that case give $n^{1-\frac{c}{k}}$-approximation, where $n=|G|$. The best constants $c$ are obtained
with the use of semidefinite programming (\cite{blum,karger,arora}). 
Another important special class of graphs to consider in the coloring context are graphs defined by forbidden patterns.
These appear in many places in graph theory. For instance, every graph with the topological ordering of vertices can be 
equivalently described as not having directed cycles and every transitive tournament - as not having directed triangles.
A finite graph is planar if and only if it does not contain $K_{5}$ (the complete graph on five vertices) or $K_{3,3}$ (complete bipartite graph on six vertices with
two equal-length color classes) as a minor. One of the deepest results in graph theory, the Robertson-Seymour theorem (\cite{robertson}), states that every family of graphs 
(not necessarily planar graphs) that is closed under minors can be defined by a finite set of forbidden minors. These classes include: forests, pseudoforests, linear forests
(disjoint unions of path graphs), planar and outerplanar graphs, apex graphs, toroidal graphs, graphs that can be embedded on the two-dimensional manifold, graphs with
bounded treewidth, pathwidth or branchwidth and many more. We should notice that not having a certain
graph as a minor is a much more restrictive assumption than not having a certain graph $H$ as an induced subgraph.
Other examples include classes of graphs that can be colored with significantly fewer than $\Omega(\frac{n}{\log(n)})$ colors.
For instance, $k$-colorable graphs mentioned before do not have as induced subgraphs these graphs $H$ that have largest independent sets of size smaller than $\frac{|H|}{k}$.  
Thus all those classes can be described as not having some forbidden structures (either induced
subgraphs in the undirected scenario or subdigraphs in the directed setting).

One of these classes of graphs is of particular interest. Those are $P_{k}$-free graphs, 
where $P_{k}$ is an undirected path of $k$ vertices.
Not much is known for structural properties of $P_{k}$-free graphs for $k \geq 5$.
In particular, it is an open question whether finding the largest independent set 
is NP-hard if the class is defined by a forbidden path $P_{k}$ and $k>5$. Coloring $P_{k}$-free graphs for $k \geq 5$ is known to be NP-hard. 
Similarly, no nontrivial approximation algorithms for coloring
$P_{k}$-free graphs are known for $k>5$. Thus the question whether there exists a $n^{1-\frac{c}{k}}$-approximation algorithm
(as it is the case for $k$-colorable graphs) is open.
The completely analogous problem can be considered in the directed setting. In other words, one can ask for an optimal acyclic coloring
of $P_{k}$-free tournaments, where this time $P_{k}$ stands for the directed path tournament, i.e. a tournament with the ordering of vertices
$(v_{1},...,v_{k})$ under which the backward edges are exactly of the form $(v_{i+1},v_{i})$.
Like in the undirected case, the structural theorem of $P_{k}$-free directed graphs is not known. In particular, the question whether the
acyclic coloring problem is NP-hard for this class of graphs is open.
It is striking though that the $O(n^{1-\frac{c}{k\log(k)^{2}}})$-approximation algorithm exists and this is one of our main results in this paper.
In fact we show a stronger result. We give an algorithm that constructs in the $P_{k}$-free tournament a transitive set of order $n^{\frac{c}{k\log(k)^{2}}}$.
We show that our results are tight up to the $\log(k)$-factor in the following sense:
there exist infinite families of $P_{k}$-free tournaments with largest transitive subsets of order at most $n^{\frac{c\log(k)}{k}}$. As a corollary, 
we give tight asymptotic results regarding the so-called \textit{Erd\H{o}s-Hajnal coefficients} for directed paths. The coefficients come from the
celebrated Erd\H{o}s-Hajnal conjecture - one of the most fundamental unsolved problems in modern Ramsey graph theory.
Our algorithm for finding big transitive subsets is quadratic in the size of the input thus optimal (since the input as a tournament is of size $\Theta(n^{2})$)
and easy to implement.
It leads straightforwardly to the subcubic coloring algorithm.

\section{Related work}

Let us discuss briefly some known results regarding $P_{k}$-free graphs.
Graph coloring problem is known to be solvable in the polynomial time for $P_{k}$-free graphs, where $k \leq 4$ (\cite{chvatal}).
We already mentioned that it was proven to be NP-hard for $k \geq 5$ (\cite{tuza}).
A related problem whether a given $P_{t}$-free graph is $k$-colorable (and finding the coloring if the $k$-coloring exists) 
was considered in several papers. In \cite{sgall} it was proven that the $3$-colorability question for $P_{5}$-free graphs can be answered
in the polynomial time. In fact $3$-coloring question can be answered in the polynomial time for a more general class of $P_{6}$-free graphs
(\cite{randerath}). A polynomial algorithm answering a question whether a $P_{5}$-free graph can be $k$-colored (and constructing the coloring 
if this is the case) for arbitrary $k>0$ was given in \cite{kaminski}.
Very recently a polynomial algorithm for constructing maximum independent set in $P_{5}$-free graphs was proposed (\cite{villanger}).  
No nontrivial approximation algorithms for the coloring problem of $P_{k}$-free graphs for general $k$ were proposed.

In the directed setting it was recently proven (\cite{choromanski}) that for every $k>0$ all $P_{k}$-free tournaments have polynomial-size transitive subsets, 
i.e transitive subsets of size $\Omega(n^{\epsilon})$ for some $\epsilon>0$. Coefficients $\epsilon$ were however obtained with the use of the regularity lemma,
an inherent ingredient of the entire approach, thus applied methods did not lead to any practically interesting algorithmic results. 
For paths and in fact all prime tournaments those coefficients were proven to be of order at most $\frac{\log(k)}{k}$ (\cite{choromanski2}) in the worst-case scenarios.
That led to the substantial gap between best known upper and lower bounds (the latter being only inversely proportional to the tower function from the regularity lemma).
We practically get rid of that gap in this paper.

Other results regarding (pseudo)transitive subtournaments of polynomial sizes in $H$-free directed graphs can be found in: \cite{strong}, \cite{allt}, \cite{chorojeb},
\cite{seymour}, \cite{pseudo}, \cite{pairs}, \cite{upper}, \cite{diss} and \cite{rank}.

\section{Main results}

Before stating formally our results, we will introduce a few important definitions used throughout this article.

All graphs in this paper are finite and simple. Let $G$ be a graph. The vertex 
set of $G$ is denoted by $V(G)$, and the edge set by $E(G)$. We write 
$|G|$ to mean $|V(G)|$. We
refer to $|G|$ as the {\em order} of $G$. A {\em clique} in an undirected graph $G$ is a subset of $V(G)$ all of whose elements are pairwise
adjacent, and an {\em independent set} in $G$ is a subset of $V(G)$ all of whose 
elements are pairwise non-adjacent. 
We say that a graph $G$ is $H$-free if it does not have $H$ as an induced subgraph.
A {\em tournament} is a directed graph $T$, where 
for every two vertices $u,v$ exactly one of $(u,v)$, $(v,u)$ is an edge of $T$ (that is, a directed edge). 
 If $(u,v) \in E(T)$, we say
that $u$ is {\em adjacent to} $v$, and that $v$ is {\em adjacent from} $u$.
Equivalently, $v$ is an \textit{outneighbor} of $u$ and $u$ is an \textit{inneighbor} of $v$.
For a given $X \subseteq V(T)$ we denote by $T|X$ a subtournament of $T$ induced by a vertex set $X$.
For a graph $G$ and a subset $V \in V(G)$
we denote by $G \setminus V$ a graph obtained from $G$ by deleting $V$ and all
edges of $G$ that are: adjacent to a vertex $v \in V$ in the undirected setting and: adjacent to or from
a vertex $v \in V$ in the directed setting.
A tournament is {\em transitive} if it contains no directed cycle (equivalently, no directed
cycle of length three).
A set is \textit{transitive} if it induces a transitive subtournament.
A \textit{homogeneous set} in a graph $G$ is a subset $V \subseteq V(G)$ such that if a vertex $v \in V(G) \setminus V$
is adjacent to a vertex of $V$ then it is adjacent to all the vertices of $V$.
A graph $G$ is \textit{prime} if all its homogeneous sets other than $V(G)$ are singletons.
For two disjoint subsets of the vertices $X,Y \subseteq V(G)$ of a graph $G$ we say that $\textit{$X$ is complete to $Y$}$ if every vertex of $X$
is adjacent to every vertex of $Y$.
(Last three definitions are valid in both undirected and directed setting).

A \textit{directed path $P_{k}$} (or simply \textit{a path $P_{k}$} if it is clear from the context that a graph under consideration is a tournament)
is a tournament with vertex set $V(P_{k}) = \{v_{1},...,v_{k}\}$ and an ordering of vertices $(v_{1},...,v_{k})$ under which the backward edges are 
exactly of the form: $(v_{i+1},v_{i})$ for $i=1,...,k-1$. We call this ordering a \textit{path ordering}. If $(v_{1},...,v_{k})$ is a path ordering of $P_{k}$, then
we call an ordering $(v_{1},v_{3},v_{2},v_{5},v_{4},...,)$ a \textit{matching ordering} since under this ordering a graph of backward edges is a matching.
Let $\mathcal{B}_{P_{k}}=(E_{1},...E_{\lfloor \frac{k}{2} \rfloor})$ be a sequence of backward edges of this ordering, where the backward edges are ordered in $\mathcal{B}_{P_{k}}$ according 
to the location of their left ends in the matching ordering of $P_{k}$.  For the $ith$ backward edge $E_{i}$ we denote by $left(i)$ the location in the matching ordering of $P_{k}$
of the left end of $E_{i}$ and by $right(i)$ the location in the matching ordering of $P_{k}$ of the right end of $E_{i}$ ($left(i),right(i) \in \{1,...,k\}$).
Notice that if $k \neq 4$ then a directed path $P_{k}$ is prime.

We are ready to state our results. Our main result is as follows.

\begin{theorem}
\label{alg-theorem1}
There exists a universal constant $c>0$ such that for any $k>0$ there is an algorithm finding a transitive 
set of order $n^{\frac{c}{k\log(k)^{2}}}$ in the $P_{k}$-free $n$-vertex  tournament in $O(n^{2})$ time. 
\end{theorem}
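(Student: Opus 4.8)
The plan is to build the transitive set recursively, exploiting $P_k$-freeness to force structure on the way back- and forward-neighborhoods of carefully chosen vertices. First I would set up the natural recursion: given a $P_k$-free tournament $T$ on $n$ vertices, pick a vertex $v$ and split $V(T)\setminus\{v\}$ into its outneighborhood $N^+(v)$ and its inneighborhood $N^-(v)$; one of these has size at least $(n-1)/2$. Recursing naively only into the larger side loses a factor of $2$ per step and gives a transitive set of size $\Theta(\log n)$, which is far too weak. The point of $P_k$-freeness is that it should let us recurse more cleverly — roughly, that we can find a long "chain" of vertices $v_1,\dots,v_m$ each of whose relevant neighborhood still contains a constant (or slowly shrinking) fraction of the vertices, because if the neighborhoods shrank too fast along a directed walk we would trace out a copy of $P_k$. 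So the key structural step is: in any $P_k$-free tournament one can find either a transitive set of the target size directly, or a "dense" substructure on which the recursion only costs a factor like $1 - \tfrac{c}{k\log(k)^2}$ in the exponent rather than a constant factor.

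The engine I expect to use is the matching ordering and the sequence $\mathcal{B}_{P_k}$ of backward edges introduced just above the theorem. The idea is to forbid $P_k$ "one backward edge at a time": process a candidate path in matching order, and each of the $\lfloor k/2\rfloor$ backward edges $E_i$ imposes one constraint of the form "vertex at position $\mathrm{left}(i)$ is adjacent from the vertex at position $\mathrm{right}(i)$." Because the backward edges form a matching under this ordering, these constraints are "local" and can be handled almost independently, which is presumably where the $\log(k)^2$ (rather than something exponential in $k$) comes from — one $\log k$ from a binary-search / halving argument that locates a good vertex, and another from the number of "levels" needed to accommodate all $\sim k/2$ matched constraints. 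Concretely I would: (1) show that if no large transitive set exists, then for a positive fraction of vertices $v$, both $T|N^+(v)$ and $T|N^-(v)$ are themselves $P_{k'}$-free (or $P_k$-free) and each "large"; (2) iterate this to produce a sequence of nested vertex sets $V = W_0 \supseteq W_1 \supseteq \cdots$ together with vertices $u_j \in W_{j-1}$ such that $W_j$ is a consistently-oriented neighborhood of $u_j$ inside $W_{j-1}$, so that $\{u_1,u_2,\dots\}$ is automatically transitive; (3) argue that we can take this sequence to have length $n^{c/(k\log(k)^2)}$ before $W_j$ becomes too small, using the $P_k$-free assumption to bound how fast $|W_j|$ can drop — a run of too many consecutive "expensive" steps would embed a $P_k$. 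For the algorithmic claim, every step is: compute two neighborhoods (linear time) and decide which to recurse into, so the total is $O(n)$ rounds each costing $O(n)$, i.e. $O(n^2)$; I would just need to make sure the "which vertex to pick" decision is also computable in $O(n)$ amortized time, e.g. via degree counts, rather than requiring a search over all vertices at every level.

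The main obstacle, and the part I'd expect to occupy most of the real work, is step (3): quantifying precisely how $P_k$-freeness limits the decay of $|W_j|$ along the recursion, and matching that against the length of the transitive chain being built. A single vertex whose in- and out-neighborhoods are both dense gives a factor-$2$ loss; $P_k$-freeness must be used to show that such $2$-fold losses cannot happen too many times in a row (no more than about $k$, morally, along any directed walk, since $k+1$ of them in sequence would build a $P_k$), and then a potential/amortization argument converts "at most $O(k)$ expensive steps per $P_{k-1}$-segment" into the exponent $\tfrac{c}{k\log(k)^2}$. Getting the $\log(k)^2$ rather than, say, $k$ or $2^k$ in the denominator will require the matching-ordering decomposition of $P_k$ to be used essentially optimally — splitting the $k$ backward-edge constraints into $O(\log k)$ groups and paying only $O(\log k)$ per group — and making that bookkeeping rigorous is where the delicacy lies. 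I would also need a clean base case: once $|W_j|$ drops below some threshold depending only on $k$, stop and extract a transitive set of size $\geq \log|W_j|/\log 2$ by the trivial argument, and check that this does not dominate the bound.
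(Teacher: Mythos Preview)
Your proposal has a structural gap that no amount of amortization can close. Building a transitive chain $u_1,u_2,\ldots$ one vertex at a time with nested neighbourhoods $W_0\supseteq W_1\supseteq\cdots$ gives a recursion of the shape $f(n)\ge 1+f(\alpha n)$, which is logarithmic for any constant $\alpha<1$. Even if $P_k$-freeness guaranteed that only one step in $k$ is ``expensive'', you would get at most $O(k\log n)$ vertices, not $n^{\Theta(1/k)}$. To reach a polynomial bound you need a \emph{multiplicative} recursion $f(n)\ge 2\,f(c_k n)$, and that requires finding not a single vertex but two \emph{linear-size} sets $A,B$ with $A$ complete to $B$, recursing on both, and taking the union. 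Your outline never produces such a pair; the matching ordering $\mathcal{B}_{P_k}$ is mentioned but not actually used to extract one.

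The paper's argument is built around exactly this. It first constructs (via the subroutines \textit{CreateSequence} and \textit{MakeDensePair}, which use only that \emph{some} $k$-vertex tournament is forbidden) a smooth $(c_k,\lambda_k)$-$\alpha$-sequence $(A_1,\ldots,A_k)$: $k$ disjoint sets of size $\ge c_k n$ with $d(A_i,A_j)\ge 1-\lambda_k$ for $i<j$. Only then does the specific structure of $P_k$ enter: one tries to embed $P_k$ by placing the $i$th vertex of the matching ordering in $A_i$ and searching, backward edge by backward edge, for an edge from $A_{\mathrm{right}(i)}$ to $A_{\mathrm{left}(i)}$; since $T$ is $P_k$-free, for some $i$ no such edge exists, so $A_{\mathrm{left}(i)}$ is complete to $A_{\mathrm{right}(i)}$, and one recurses on both. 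This yields $f(n)\ge 2f(\tfrac{c_k}{4}n)$ and hence $f(n)\ge n^{1/\log(4/c_k)}$. The factor $k\log(k)^2$ comes entirely from the $\alpha$-sequence construction: $c_k=c\cdot(c/2)^{\log k-1}$ with $c=\lambda^k/k^2$ and $\lambda=1/(32k^4)$, so $\log(1/c_k)=\Theta(k\log(k)^2)$ --- one $\log k$ from the $\log k$ levels of divide-and-conquer in \textit{CreateSequence}, the other from $\log(1/\lambda)=\Theta(\log k)$ inside the $\lambda^k$ paid by \textit{MakeDensePair}. Your proposed sources for the two $\log k$'s (binary search for a vertex; levels for the matched constraints) do not correspond to anything in the actual mechanism.
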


As a simple corollary we obtain:

\begin{theorem}
\label{alg-theorem2}
There exists a universal constant $c>0$ such that for any $k>0$ there is an algorithm constructing acyclic coloring
of the $P_{k}$-free $n$-vertex tournament using only $n^{1-\frac{c}{k\log(k)^{2}}}$ colors. Furthermore, the algorithm
has running time $O(n^{3-\frac{c}{k\log(k)^{2}}})$.
\end{theorem}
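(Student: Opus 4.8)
The plan is to derive Theorem~\ref{alg-theorem2} from Theorem~\ref{alg-theorem1} by the standard ``peeling'' argument: repeatedly extract a large transitive set, assign it a fresh color, delete it, and recurse on what remains. Since a transitive set induces an acyclic subtournament, each color class is automatically a legal acyclic color class, so the only thing to control is the number of iterations and the total running time.

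\begin{prf}
Let $c>0$ be the universal constant from Theorem~\ref{alg-theorem1}, and set $\epsilon = \tfrac{c}{k\log(k)^{2}}$. We describe a greedy coloring procedure. Given a $P_{k}$-free $n$-vertex tournament $T$, observe first that every induced subtournament $T|X$ of $T$ is again $P_{k}$-free, so Theorem~\ref{alg-theorem1} applies to it. Starting with $i=1$ and $T_{1}=T$, while $T_{i}$ is nonempty we run the algorithm of Theorem~\ref{alg-theorem1} on $T_{i}$ to obtain a transitive set $S_{i}$ with $|S_{i}| \geq |T_{i}|^{\epsilon}$, assign color $i$ to all vertices of $S_{i}$, and set $T_{i+1} = T_{i}\setminus S_{i}$. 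Each $S_{i}$ is transitive, hence induces an acyclic subtournament, so this is a valid acyclic coloring of $T$; it remains to bound the number of colors used and the total time.

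For the number of colors, let $m_{i} = |T_{i}|$, so $m_{1}=n$ and $m_{i+1} = m_{i} - |S_{i}| \leq m_{i} - m_{i}^{\epsilon}$. A routine calculation shows that a sequence obeying this recurrence reaches $0$ after at most $O(n^{1-\epsilon})$ steps: intuitively, while $m_{i}\geq n/2$ each step removes at least $(n/2)^{\epsilon}$ vertices, so after $O(n^{1-\epsilon})$ steps the size has halved, and summing the geometric series $\sum_{j\geq 0} 2^{-j(1-\epsilon)}\cdot O(n^{1-\epsilon})$ over successive halvings still gives $O(n^{1-\epsilon})$ total steps. Hence the coloring uses at most $n^{1-\epsilon} = n^{1-\frac{c}{k\log(k)^{2}}}$ colors, after adjusting the constant $c$ (and noting the bound is trivial for small $n$). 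For the running time, the $i$th invocation of the algorithm of Theorem~\ref{alg-theorem1} costs $O(m_{i}^{2}) \leq O(n^{2})$; since there are $O(n^{1-\epsilon})$ invocations, the total is $O(n^{2}\cdot n^{1-\epsilon}) = O(n^{3-\frac{c}{k\log(k)^{2}}})$, as claimed. \bbox
\end{prf}

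The only genuinely delicate point is the bookkeeping in the recurrence $m_{i+1}\leq m_{i}-m_{i}^{\epsilon}$: one wants the cleanest way to see that it terminates in $O(n^{1-\epsilon})$ steps rather than, say, $O(n)$ steps, and to confirm that absorbing the multiplicative constants into $c$ does not inflate the exponent. A clean way to present this is to group the iterations into $\lceil \log_{2} n\rceil$ phases, where phase $t$ runs while $m_{i}\in (n\,2^{-t-1}, n\,2^{-t}]$; in that phase every removed set has size at least $(n\,2^{-t-1})^{\epsilon}$, so the phase lasts at most $n\,2^{-t}/(n\,2^{-t-1})^{\epsilon} = 2\cdot(n\,2^{-t-1})^{1-\epsilon}$ iterations, and summing this geometric-in-$t$ quantity over all phases gives $O(n^{1-\epsilon})$. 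Everything else is immediate from Theorem~\ref{alg-theorem1} together with the hereditary nature of $P_{k}$-freeness.
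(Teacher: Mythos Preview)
Your proof is correct and follows essentially the same approach as the paper: greedily peel off transitive sets provided by Theorem~\ref{alg-theorem1}, and bound the number of iterations via the halving argument (the paper phrases it as the recurrence $H(n)\leq O(n^{1-\epsilon})+H(n/2)$, which is exactly your phase analysis). The running-time bound by multiplying $O(n^{1-\epsilon})$ iterations by $O(n^{2})$ per call also matches the paper's reasoning.
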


This result immediately implies the following:

\begin{theorem}
\label{thm-dichromatic}
There exists a universal constant $c>0$ such that the dichromatic number of the $P_{k}$-free $n$-vertex tournament satisfies:
$$\chi_{a}(P_{k}) \leq n^{1-\frac{c}{k \log(k)^{2}}}.$$
\end{theorem}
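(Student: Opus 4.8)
The plan is simply to unwind the definition of the dichromatic number and invoke Theorem~\ref{alg-theorem2}. Recall that $\chi_{a}(D)$ is, by definition, the least number of colors used by any acyclic coloring of $D$; hence to bound it from above it suffices to exhibit a single acyclic coloring that uses few colors. Theorem~\ref{alg-theorem2} does exactly this: for the universal constant $c>0$ produced there and for every $k>0$ it yields (in fact algorithmically, in time $O(n^{3-\frac{c}{k\log(k)^{2}}})$) an acyclic coloring of every $P_{k}$-free $n$-vertex tournament $T$ with at most $n^{1-\frac{c}{k\log(k)^{2}}}$ colors. Consequently $\chi_{a}(T)\le n^{1-\frac{c}{k\log(k)^{2}}}$, which is precisely the asserted inequality.

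The only point that deserves a word of care is the order of quantifiers for the constant: Theorems~\ref{alg-theorem1}, \ref{alg-theorem2} and \ref{thm-dichromatic} are to be read with the \emph{same} universal $c$. So one first fixes the $c$ coming from Theorem~\ref{alg-theorem1}, checks that the reduction of Theorem~\ref{alg-theorem2} to Theorem~\ref{alg-theorem1} (repeatedly peeling off a transitive set of order $n^{\frac{c}{k\log(k)^{2}}}$, giving it a fresh color, and recursing on the remaining subtournament, which is again $P_{k}$-free) does not degrade the exponent, and then this very $c$ carries over verbatim to the present statement. For completeness one may also record the trivial base cases (bounded $n$, where $\chi_{a}(T)\le |T|$ holds outright), so that the exponential expression is meaningful and valid for all $n$.

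Since every ingredient is already in place, there is essentially no obstacle at this stage; the entire difficulty of the paper is concentrated in Theorem~\ref{alg-theorem1}, the construction of the large transitive set, and Theorem~\ref{thm-dichromatic} is a purely formal consequence of it via Theorem~\ref{alg-theorem2}.
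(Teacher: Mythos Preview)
Your proposal is correct and matches the paper's approach exactly: the paper does not give a separate proof of Theorem~\ref{thm-dichromatic} at all, stating it as an immediate consequence of Theorem~\ref{alg-theorem2}. Your additional remarks about the uniformity of the constant and the base cases are fine but go beyond what the paper records.
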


It also serves as the $O(n^{1-\frac{c}{k\log(k)^{2}}})$-approximation algorithm for the optimal acyclic coloring of the $P_{k}$-free $n$-vertex graph.

Let us switch now to the conjecture of Erd\H{o}s and Hajnal.
The conjecture (\cite{erdos0}) says that:

\begin{conjecture}
\label{erdos-hajnal-conjecture-undirected}
For every undirected graph $H$ there exists a constant $\epsilon(H)>0$ such that the following holds: every $H$-free graph $G$ contains a clique or a stable set
of size at least $|G|^{\epsilon(H)}$.
\end{conjecture}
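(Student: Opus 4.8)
The statement above is the Erd\H{o}s--Hajnal conjecture (\cite{erdos0}), which remains open for general $H$; what follows is therefore an outline of the lines of attack that have settled the known special cases, together with the point at which they all stall, rather than a complete proof.

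The natural starting point consists of two standard reductions. By Ramsey's theorem every $H$-free graph on $n$ vertices already contains a clique or a stable set of size $\Omega(\log n)$, so the content of the conjecture is precisely to boost this logarithmic bound to a polynomial one. Next, since being $H$-free is inherited by induced subgraphs, it is enough to treat a single minimal obstruction, and a substitution argument (Alon--Pach--Solymosi) shows that if the conjecture holds for $H_{1}$ and for $H_{2}$ then it holds for every graph obtained by substituting $H_{2}$ into a vertex of $H_{1}$; since every graph arises from prime graphs by iterated substitution, it suffices to prove the conjecture for \emph{prime} $H$. This is exactly why the prime tournaments $P_{k}$ are the critical instances of the tournament analogue, and it is the reason the present paper concentrates on directed paths.

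For a fixed prime $H$ the strategy I would pursue is some variant of the ``bi-clique'' method that underlies the known cases: exhibit, in every $H$-free $n$-vertex graph $G$, two disjoint sets $A,B$ with $A$ complete or anticomplete to $B$ and with $|A|,|B|$ as large as possible, and then combine a homogeneous set found recursively in $G|A$ with one found recursively in $G|B$ --- when $A$ is complete to $B$ the union of two cliques is again a clique, and dually for stable sets. If one can take $|A|,|B|\ge \delta n$ --- the so-called strong Erd\H{o}s--Hajnal property --- this recursion yields a homogeneous set of polynomial size; this is how the conjecture has been verified for $P_{4}$ (cographs), for all graphs on at most four vertices, and for the bull, and a polynomial-size variant was the key to the recent resolution for $C_{5}$. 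The tournament analogues of such statements for $P_{k}$ are exactly what \cite{choromanski} establishes and what the present paper sharpens. To add a genuinely new $H$ one would try to produce such a pair by (i) a Ramsey-type stability argument that exploits the absence of $H$, (ii) bounding the VC dimension of the neighbourhood set system of $H$-free graphs and invoking the fractional Helly / $(p,q)$ machinery of Fox--Pach--Suk, or (iii) a regularity decomposition in which $H$-freeness forbids the dense-but-irregular cross pattern, in the spirit of the proof in \cite{choromanski}.

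The main obstacle is precisely the reason the conjecture is still open: the regularity- and VC-based tools yield only the \emph{sub}polynomial bound $2^{\Omega(\sqrt{\log n})}$ in the general case (and $n^{\delta}$ only for very structured $H$), and no mechanism is known for upgrading a subpolynomial guarantee to the conjectured polynomial one --- even for the single five-vertex graph $C_{5}$ this step required a substantial new idea. An honest plan must therefore concede that a genuinely new ingredient is needed for the general statement. What the present paper does instead is to pin down, up to a $\log k$ factor, the optimal exponent $\epsilon(P_{k})$ for the infinite family of directed paths, thereby resolving the \emph{quantitative} form of the conjecture on an infinite family of prime graphs, as announced in the abstract.
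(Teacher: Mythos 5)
You have correctly identified that this statement is not a theorem of the paper at all: it is the Erd\H{o}s--Hajnal conjecture itself, quoted from \cite{erdos0} purely as motivation, and the paper neither proves it nor claims to (its actual results concern the directed analogue for the specific prime tournaments $P_{k}$). Your survey of the standard reductions (the $\Omega(\log n)$ Ramsey baseline, the substitution argument of \cite{alon} reducing to prime $H$) and your honest admission that no known technique closes the gap from subpolynomial to polynomial bounds is accurate, so there is nothing to fault relative to the paper --- there is simply no proof on either side to compare.
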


In its directed equivalent version (\cite{alon}) undirected graphs are replaced by tournaments and cliques/stable sets by transitive subtournaments:

\begin{conjecture}
\label{erdos-hajnal-conjecture-directed}
For every tournament $H$ there exists a constant $\epsilon(H)>0$ such that the following holds: every $H$-free tournament $T$ contains a transitive subtournament of order at least $|T|^{\epsilon(H)}$.
\end{conjecture}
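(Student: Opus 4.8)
The plan is to attack Conjecture~\ref{erdos-hajnal-conjecture-directed} in two stages: first reduce it to the case of \emph{prime} tournaments $H$, and then push the combinatorial machinery developed here for directed paths to that case. The first stage rests on a \emph{substitution lemma}: if tournaments $H_{1}$ and $H_{2}$ each satisfy the conclusion of Conjecture~\ref{erdos-hajnal-conjecture-directed}, with exponents $\epsilon_{1}$ and $\epsilon_{2}$, and $H$ is obtained from $H_{1}$ by substituting $H_{2}$ for one of its vertices, then $H$ satisfies it as well, with an exponent that is an explicit function of $\epsilon_{1}$, $\epsilon_{2}$ and $|H_{1}|$. I would prove this by the iterated blow-up / partition argument of Alon, Pach and Solymosi (see \cite{alon}), which transfers to tournaments essentially verbatim: given an $H$-free tournament $T$, one either locates a large homogeneous out- or in-neighbourhood and recurses inside it, or argues that after contracting maximal homogeneous sets the resulting quotient must be $H_{1}$-free (an $H_{1}$ in the quotient together with an $H_{2}$ inside one of the corresponding, necessarily large, homogeneous sets would reconstruct $H$ in $T$), and then applies the hypothesis for $H_{1}$ to the quotient and pulls the transitive set back. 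Since, by the modular decomposition, every tournament is built from prime tournaments and the one-vertex tournament by repeated substitution, this reduces the conjecture to the case $H$ prime. (Equivalently one could appeal to the known equivalence of the directed and undirected Erd\H{o}s--Hajnal conjectures, but this merely relocates the difficulty.)

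For $H$ prime the idea is the following. Since $P_{k}$ is itself prime for $k\neq 4$, Theorem~\ref{alg-theorem1} already constitutes a nontrivial instance of the prime case, and its proof never uses the global structure of a path; it uses only that $P_{k}$ admits a \emph{matching ordering}, a linear order of its vertices whose backward edges form a matching of bounded span, and it is precisely this that drives the divide-and-conquer ``chain of uppersets'' construction producing the transitive set of order $n^{c/(k\log(k)^{2})}$. I would isolate the weakest property of this kind --- call a tournament having it a \emph{pseudo-celebrity} --- prove, by a robust rerun of the scheme of this paper, that every $H$-free tournament with $H$ a pseudo-celebrity on $h$ vertices contains a transitive set of order $n^{c/\mathrm{poly}(h)}$, and then establish that \emph{every} prime tournament is a pseudo-celebrity. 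Together with the substitution lemma this would yield Conjecture~\ref{erdos-hajnal-conjecture-directed} in full.

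The last point --- that primeness \emph{alone} forces a matching-ordering-like substructure inside every $H$-free tournament --- is the heart of the matter and the step I expect to be genuinely hard; indeed it is exactly the gap that keeps the conjecture open, since at present there is no structure theorem, not even a rough one, for $H$-free tournaments with $H$ an arbitrary prime tournament, and nothing is known even for certain prime tournaments on six vertices. A proof would presumably have to come either from a new Ramsey-type forcing argument that manufactures such a substructure directly out of primeness, or from a regularity-type decomposition with sub-tower --- ideally polynomial in $h$ --- dependence, a qualitative improvement over the regularity-lemma approach of \cite{choromanski}. Short of that, progress should be incremental: the substitution lemma together with Theorem~\ref{alg-theorem1} already handles every tournament obtained from directed paths by substitution, and ``nearly transitive'' prime tournaments (those with a linear order having few backward edges) should succumb to the matching-ordering machinery with only mild loss, isolating the forcing step as the essential remaining obstacle between the present results and the full conjecture.
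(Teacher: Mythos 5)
The statement you were asked to prove is Conjecture~\ref{erdos-hajnal-conjecture-directed}, which the paper does not prove and does not claim to prove: it is quoted from \cite{alon} as one of the central open problems in the area, and the paper establishes only the special case $H=P_{k}$ (Theorem~\ref{alg-theorem1}, with the matching upper bound giving Theorem~\ref{simple-theorem}). So there is no proof in the paper to compare against, and your submission is, by your own framing, a research programme rather than a proof. Its first stage --- the reduction to prime $H$ by the substitution argument of Alon, Pach and Solymosi --- is standard and is exactly the reduction the paper alludes to when it says that proving the conjecture for all prime tournaments suffices. The genuine gap is the second stage, and you name it yourself: the claim that every prime tournament is a ``pseudo-celebrity'', i.e.\ carries the kind of ordering structure that lets the $\alpha$-sequence machinery run. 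That claim is unproved, and for the natural formalization it is simply too strong. The engine of \textit{FindTrans} is the for-loop that reconstructs the forbidden pattern backward edge by backward edge from a smooth $\alpha$-sequence; each iteration uses one backward edge $(y,x)$ whose endpoints sit in two distinct elements of the sequence and shrinks the remaining elements only through the common in/out-neighbourhoods of $x$ and $y$. This bookkeeping works precisely because, under the matching ordering of $P_{k}$, the backward edges are pairwise disjoint and each element of the sequence is touched by at most one of them. A typical prime tournament on $h$ vertices has $\Theta(h^{2})$ backward edges under \emph{every} vertex ordering, so the update step has no analogue, and the paper's own Conclusions only claim an extension to tournaments admitting an ordering whose backward-edge graph is a matching, explicitly leaving the characterization of amenable patterns open. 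Thus, as written, your plan yields nothing beyond what Theorem~\ref{alg-theorem1} plus the substitution lemma already give (patterns built from paths and matching-ordered tournaments by substitution), and the step you defer --- extracting the needed substructure from primeness alone, or proving a regularity-type decomposition with sub-tower dependence --- is exactly the open problem, not a step of a proof.

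One smaller caution: in the reduction stage you say the quotient by maximal homogeneous sets ``must be $H_{1}$-free''; to make the substitution lemma quantitative (and to keep the pulled-back transitive set polynomial in $|T|$) you need the standard dichotomy that either some homogeneous set is large enough to recurse into with the exponent for $H_{2}$, or the quotient is large and $H_{1}$-free, and the exponent you obtain degrades with $|H_{1}|$ and $|H_{2}|$; this is routine but should be stated, since otherwise the induction over the modular decomposition does not close. None of this, however, affects the main point: the proposal contains no proof of the conjecture, only a correct reduction to a case that remains open.
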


The coefficient $\epsilon(H)$ from the statement of the conjecture is called \textit{the Erd\H{o}s-Hajnal coefficient}.
The conjecture was proven so far only for some very special forbidden patterns. Those of them that are prime are particularly important since if the conjecture is true for all prime graphs
then it is true in general (\cite{alon}).
There are no prime undirected graphs of order at least six for which the conjecture is known to be true and for a long time that was the case also in the directed setting.
Very recently an infinite family of prime tournaments satisfying the conjecture was constructed (\cite{choromanski}). Among them were directed paths $P_{k}$.
The proof of the conjecture for them provided only purely theoretical guarantees since all lower bounds for $\epsilon(H)$ were obtained by the regularity lemma.
Our algorithm gives lower bounds on the Erd\H{o}s-Hajnal coefficient that are very close to the best upper bounds since we have the following (\cite{choromanski2}):

\begin{theorem}
\label{uppertheorem}
There exists a constant $c>0$ such that every prime tournament $H$ satisfies:
$$\epsilon(H) \leq \frac{c\log(|H|)}{|H|}.$$
\end{theorem}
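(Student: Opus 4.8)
\medskip\noindent\textbf{Proof plan.}
Write $h := |H|$ and let $\tau(T)$ denote the order of a largest transitive subtournament of $T$. To bound $\epsilon(H)$ from above it suffices to construct, for every prime $H$, an infinite sequence of $H$-free tournaments $T_{1},T_{2},\dots$ with $|T_{t}|\to\infty$ and $\tau(T_{t})\le |T_{t}|^{\,c\log h/h}$: if some exponent $\epsilon>c\log h/h$ worked for $H$, then for $t$ large $|T_{t}|^{\epsilon}$ would exceed $\tau(T_{t})$, contradicting the Erd\H{o}s--Hajnal property for the $H$-free tournament $T_{t}$. The family will be $T_{t}:=D^{(t)}$, the $t$-th iterated substitution (lexicographic) power $D^{(1)}=D$, $D^{(t)}=D[D^{(t-1)}]$ (substitute a copy of $D^{(t-1)}$ for each vertex of $D$), of a single ``base'' tournament $D$ which we require to be $H$-free, to have order $2^{\Theta(h)}$, and to satisfy $\tau(D)=O(h)$.

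I would obtain the base tournament by the probabilistic method. Put $d:=2^{\lfloor h/2\rfloor-1}$ and let $R$ be a uniformly random tournament on $d$ vertices. The expected number of copies of $H$ in $R$ is at most $d^{h}2^{-\binom{h}{2}}=2^{-\Omega(h)}$, so $R$ is $H$-free with probability tending to $1$ (and had we taken $d$ a constant factor larger so that this expectation were $o(d)$ rather than $o(1)$, deleting one vertex from each copy of $H$ would yield the same conclusion with $|D|$ reduced only by a constant factor). Independently, the expected number of transitive subtournaments of $R$ of order $s$ is at most $d^{s}2^{-\binom{s}{2}}$, which is $o(1)$ as soon as $s>2\log_{2}d+1$; hence with probability tending to $1$ we have $\tau(R)\le 2\log_{2}d+2\le h$. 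Therefore, for $h$ large enough there is a tournament $D$ on $d=2^{\Theta(h)}$ vertices that is $H$-free and has $\tau(D)\le h$.

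Next I would analyse the powers. The key structural fact is that inside $D^{(t)}=D[D^{(t-1)}]$ every copy of $D^{(t-1)}$ is a homogeneous set, so if $D^{(t)}$ contained a copy $X$ of $H$ then each intersection of $X$ with a copy of $D^{(t-1)}$ would be a homogeneous subset of $X\cong H$, hence, by primality of $H$, empty, a singleton, or all of $X$. In the first two cases $X$ would be isomorphic to a subtournament of $D$, in the last case it would lie inside a copy of $D^{(t-1)}$; either way induction on $t$ together with $H$-freeness of $D$ gives a contradiction, so every $D^{(t)}$ is $H$-free. A short argument (partition a transitive set according to the copies of $B$ it meets; each block is transitive, and the set of blocks met induces a transitive subtournament of $A$) shows $\tau(A[B])=\tau(A)\tau(B)$ for all tournaments $A,B$. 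Consequently $|D^{(t)}|=d^{t}$ and $\tau(D^{(t)})=\tau(D)^{t}=|D^{(t)}|^{\,\log\tau(D)/\log d}$. Since $\log d=\Theta(h)$ and $\log\tau(D)=O(\log h)$, the exponent $\log\tau(D)/\log d$ is $O(\log h/h)$ uniformly in $t$, and letting $t\to\infty$ gives $\epsilon(H)\le c\log h/h$ for a universal constant $c$; the finitely many small values of $h$, for which $d$ is too small for this to be informative, are dealt with trivially using $\epsilon(H)\le 1$ after enlarging $c$.

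The step I expect to be the crux is the choice of $D$, where two opposing demands must be reconciled: $D$ must be $H$-free, which for a (quasi)random tournament caps $|D|$ at roughly $2^{h/2}$, yet $D$ must be taken as large as this permits while still having only $O(\log|D|)$ vertices in each transitive subtournament, which forces $\tau(D)$ to be of order $\log|D|$, i.e.\ of order $h$ exactly when $|D|\approx 2^{h/2}$. Both thresholds coincide at the scale $2^{h/2}$, and it is precisely this coincidence that yields the ratio $\log\tau(D)/\log|D|=\Theta(\log h/h)$: making $|D|$ larger reintroduces $H$, making it smaller only weakens the bound. A secondary point requiring care is that it is primality of $H$ — not a weaker hereditary property — that makes all the substitution powers $H$-free, together with the double induction that underpins this.
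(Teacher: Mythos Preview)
Your proposal is correct and follows essentially the same line as the paper. Note that the paper does not prove Theorem~\ref{uppertheorem} directly but cites it from \cite{choromanski2}; what the paper does reproduce (in the subsection on infinite families of $P_{k}$-free tournaments) is precisely your construction specialized to $H=P_{k}$: a probabilistically chosen base tournament $B$ on $2^{\Theta(h)}$ vertices that is $H$-free with $\tau(B)\le h$, iterated substitution powers $F_{i+1}=B\times F_{i}$, the primality argument to propagate $H$-freeness through the powers, and the multiplicativity $\tau(F_{i+1})=\tau(B)\tau(F_{i})$ to get the exponent $\log\tau(B)/\log|B|=O(\log h/h)$. Your write-up is slightly more careful than the paper's sketch (you make the union-bound thresholds explicit and spell out the homogeneous-set dichotomy), but the ideas coincide.
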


Combining this result with the lower bounds produced by our algorithm, we obtain the following result regarding the asymptotic behaviour of the Erd\H{o}s-Hajnal coefficients
of directed paths $P_{k}$:

\begin{theorem}
\label{simple-theorem}
The Erd\H{o}s-Hajnal coefficient of the directed path $P_{k}$ satisfies:
$$\epsilon(P_{k}) = \frac{1}{k^{1+o(1)}}.$$
\end{theorem}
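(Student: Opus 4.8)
The plan is to derive Theorem~\ref{simple-theorem} purely as a combination of the matching upper and lower bounds on $\epsilon(P_k)$, treating $k$ as the asymptotic parameter. From Theorem~\ref{alg-theorem1}, every $P_k$-free $n$-vertex tournament contains a transitive subset of order $n^{c/(k\log(k)^2)}$, so by definition of the Erd\H{o}s-Hajnal coefficient we immediately get the lower bound $\epsilon(P_k) \geq \frac{c}{k\log(k)^2}$. For the upper bound, I would invoke Theorem~\ref{uppertheorem} with $H = P_k$: since $P_k$ is prime for $k \neq 4$ (as noted in the excerpt right after the definition of the path ordering) and $|P_k| = k$, we obtain $\epsilon(P_k) \leq \frac{c'\log(k)}{k}$ for all $k \neq 4$, which is all that matters for an asymptotic statement in $k$.

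Next I would massage both bounds into the claimed form $\epsilon(P_k) = k^{-1-o(1)}$. Taking logarithms, the statement $\epsilon(P_k) = k^{-1+o(1)}$ is equivalent to $\log \epsilon(P_k) = -(1+o(1))\log k$, i.e. $\frac{\log(1/\epsilon(P_k))}{\log k} \to 1$ as $k \to \infty$. The lower bound gives $\frac{1}{\epsilon(P_k)} \leq \frac{k\log(k)^2}{c}$, so $\log(1/\epsilon(P_k)) \leq \log k + 2\log\log k - \log c = (1+o(1))\log k$. The upper bound gives $\frac{1}{\epsilon(P_k)} \geq \frac{k}{c'\log k}$, so $\log(1/\epsilon(P_k)) \geq \log k - \log\log k - \log c' = (1-o(1))\log k$. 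Combining, $\log(1/\epsilon(P_k)) = (1+o(1))\log k$, which is exactly the assertion $\epsilon(P_k) = \frac{1}{k^{1+o(1)}}$.

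There is essentially no hard step here — the entire content of the theorem is packaged inside Theorems~\ref{alg-theorem1} and~\ref{uppertheorem}, and Theorem~\ref{simple-theorem} is just the observation that these two bounds sandwich $\epsilon(P_k)$ tightly on the logarithmic scale. The only points requiring a word of care are: (i) noting that the $\log(k)$ and $\log(k)^2$ polylogarithmic factors are absorbed into the $o(1)$ in the exponent, so the gap between the upper bound exponent $\frac{\log k}{k}$ and the lower bound exponent $\frac{1}{k\log(k)^2}$ genuinely disappears asymptotically; and (ii) the harmless exclusion of the single value $k = 4$ (where $P_4$ is not prime and Theorem~\ref{uppertheorem} does not directly apply), which is irrelevant for an $o(1)$ statement as $k \to \infty$. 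I would therefore write the proof as a short two-line sandwich argument with the logarithmic rewriting made explicit.
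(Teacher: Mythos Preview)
Your proposal is correct and follows exactly the approach the paper intends: the paper presents Theorem~\ref{simple-theorem} simply as the combination of the lower bound from Theorem~\ref{alg-theorem1} with the upper bound from Theorem~\ref{uppertheorem}, without spelling out the logarithmic sandwich argument, and your write-up fills in precisely those details (including the primality of $P_k$ for $k\neq 4$ and the absorption of the polylogarithmic factors into the $o(1)$).
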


So far such precise asymptotics were known only for one infinite family of prime tournaments, so-called \textit{stars} (see: \cite{choromanski} for a definition of a star).
Our results make the family of directed paths the second class of prime tournaments for which these asymptotics are known.

In the next section we present algorithms mentioned in Theorem \ref{alg-theorem1} and Theorem \ref{alg-theorem2}.
In the following section we prove that both algorithms have properties described in these theorems.
In the last section we summarize our results and briefly discuss possible extensions of the presented techniques.

\section{The Algorithm}

All considered logarithms are of base two from now on.  Without loss of generality we will assume that $k=2^{w}$ for some $w>0$.
First we will present an algorithm \textit{FindTrans} that finds in the $P_{k}$-free $n$-vertex tournament a transitive subset of size $n^{\frac{c}{k\log(k)^{2}}}$ for some universal
constant $c>0$ (an exact value of this constant may be calculated, but we will not focus on it in the paper).
The acyclic coloring algorithm \textit{AcyclicColoring} is a simple application of the former. It runs \textit{FindTrans} to find the first color class, removes it from the tournament,
runs \textit{FindTrans} on the remaining tournament to find the second color class, etc.

\subsection{Algorithm  \textit{FindTrans}}

Before giving a description of the algorithm \textit{FindTrans}, we need to introduce a few more definition. 
For a tournament $T$ and two disjoint nonempty subsets $X,Y \subseteq V(T)$ we denote $d(X,Y) = \frac{e(X,Y)}{|X||Y|}$,
where $e(X,Y)$ is the number of directed edges of $T$ going from $X$ to $Y$. The expression $d(X,Y)$ basically encodes directed
density of edges from $X$ to $Y$.

\begin{algorithm}
\SetAlgoLined
\textbf{Input: } $k>1$ and $\alpha$-sequence $\theta=(A_{1},...,A_{k})$ of length $k$\;
\textbf{Output: } $\alpha$-sequence $\theta_{s}$\;
\Begin{
let $\lambda = \frac{1}{32k^{4}}$ and $\lambda_{k} = 4\lambda k^{2}$\;
let $C_{i,j}=\{v \in A_{i} : |N^{\theta}_{v}(j)| \leq |A_{j}|(1-2k \lambda_{k})\}$ for $i,j \in \{1,...,h\}$, $i \neq j$\;
update: $A_{i} \leftarrow A_{i} \setminus \bigcup_{j \neq i} C_{i,j}$\;
output $(A_{1},...,A_{k})$\;
}
\caption{Algorithm \textit{MakeSmooth}}
\label{makesmooth}
\end{algorithm}

We say that a sequence $(A_{1},...,A_{l})$
of pairwise disjoint subsets of $V(T)$ is a \textit{$(c,\lambda)$-$\alpha$-sequence of length $l$} if the following holds: 
\begin{itemize}
\item $|A_{i}| \geq c|T|$ for $i=1,...,l$ and
\item $d(A_{i},A_{j}) \geq 1 - \lambda$ for $1 \leq i < j \leq l$.
\end{itemize}

If the parameters $c$, $\lambda$ of the \textit{$(c,\lambda)$-$\alpha$-sequence} are not important, we simply refer to it as an
$\alpha$-sequence.

We say that a $(c,\lambda)$-$\alpha$-sequence $(A_{1},...,A_{l})$ of length $l$ is \textit{smooth} if
the following strenghtening of the second condition from the definition above holds: 
\begin{itemize}
\item $d(\{x\},A_{j}) \geq 1 - \lambda$ for $x \in A_{i}$, $1 \leq i < j \leq l$ and,
\item $d(A_{i},\{y\}) \geq 1 - \lambda$ for $y \in A_{j}$, $1 \leq i < j \leq l$.
\end{itemize} 

Given an $\alpha$-sequence $\theta = (A_{1},...,A_{l})$, a vertex $v \in A_{i}$ and $j \neq i$ we denote by $N^{\theta}_{v}(j)$:
\begin{itemize}
\item a set of all outneighbors of $v$ from $A_{j}$ if $j>i$ and,
\item a set of all inneighbors of $v$ from $A_{j}$ if $j<i$. 
\end{itemize}

For an $\alpha$-sequence $\theta=(A_{1},...,A_{l})$ we denote: $V(\theta)=A_{1} \cup ... \cup A_{l}$.
Let $\theta_{1}=(A_{1},...,A_{l})$ and $\theta_{2}=(B_{1},...,B_{r})$ be two disjoint $\alpha$-sequences.
We denote by $\theta_{1} \otimes \theta_{2}$ the $\alpha$-sequence $(A_{1},...,A_{l},B_{1},...,B_{r})$.
For a set $A$ and $m \leq |A|$ we denote by $tr(A, m)$ the truncated version of $A$ obtained by taking arbitrarily its
$m$ elements.  
For an $\alpha$-sequence $\theta = (A_{1},...,A_{l})$ we denote: $tr(\theta, m)=(tr(A_{1},m),...,tr(A_{l},m))$.

If the order of the given $P_{k}$-free tournament is too small, the algorithm \textit{FindTrans} (Algorithm \ref{findtrans}) returns a trivial answer (and it is easy to see
that this gives good asymptotics on the coefficient $\epsilon$). 

\begin{algorithm}
\SetAlgoLined
\textbf{Input: } $k>0$ and $P_{k}$-free tournament $T$\;
\textbf{Output: } transitive subset in $V(T)$ of order $|T|^{\frac{c}{k\log(k)^{2}}}$\;
\Begin{
\If {$|T|=1$} {
output $V(T)$\;
}
let $c_{k} = \frac{1}{k} (\frac{\lambda^{k}}{k^{2}})^{\log(k)+1}$, where: $\lambda=\frac{1}{32k^{4}}$\;
\If{$1 < |T| \leq \frac{k}{c_{k}}$} {
output any $2$-element subset of $V(T)$\; 
}
run \textit{CreateSequence(k,T)} to obtain an $\alpha$-sequence $\theta$ of length $k$\;
run \textit{MakeSmooth(k, $\theta$)} to obtain a smooth $\alpha$-sequence $(A_{1},...,A_{k})$\;
initialize: $\theta_{s} \leftarrow (A_{1},...,A_{k})$\;
let $\theta_{s}(i)$ denote the $ith$ element of $\theta_{s}$ \;
\For{$i=1,...,\frac{k}{2}$} {
   let $u=left(i)$\;
   let $v=right(i)$\;
   \eIf {there exists an edge $e=(y,x)$ from $\theta_{s}(v)$ to $\theta_{s}(u)$}
   {
     let $A^{'}_{v} \leftarrow \theta_{s}(v)$,
         $A^{'}_{u} \leftarrow \theta_{s}(u)$ and\\
         $A^{'}_{t} \leftarrow \theta_{s}(t) \cap N^{\theta_{s}}_{y}(t) \cap N^{\theta_{s}}_{x}(t)$ for $t \in \{1,...,k\} \setminus \{v,u\}$\;
    update: $\theta_{s} \leftarrow (A^{'}_{1},...,A^{'}_{k})$\;
   }{
       run $FindTrans(k,T|A_{u})$ to obtain a transitive subset $M_{1}$\;
       run $FindTrans(k,T|A_{v})$ to obtain a transitive subset $M_{2}$\;
       output $M_{1} \cup M_{2}$\;
   }
}
}
\caption{Algorithm \textit{FindTrans}}
\label{findtrans}
\end{algorithm}

Otherwise, the algorithm uses two subprocedures: \textit{CreateSequence} that constructs in the $P_{k}$-free tournament $T$ an
$\alpha$-sequence of length $k$ and \textit{MakeSmooth} that uses that sequence to construct a smooth $\alpha$-sequence
of the same length. The $\textit{CreateSequence}$ procedure does not rely on the structural properties of the $P_{k}$-free tournaments,
but just uses the fact that a tournament it operates on is $H$-free for some $k$-vertex forbidden pattern $H$.
We will discuss it in much detail later. The \textit{MakeSmooth} procedure (Algorithm \ref{makesmooth}) is a standard method for getting rid of these vertices from the
given $\alpha$-sequence that have many less in/out-neighbors in some element of the $\alpha$-sequence than the density condition would suggest.

\begin{algorithm}
\SetAlgoLined
\textbf{Input: } $r>1$ and $P_{k}$-free $n$-vertex tournament $T$\;
\textbf{Output: } an $\alpha$-sequence of length $r$ in $T$\;
\Begin{
let $V(P_{k})=\{h_{1},...,h_{k}\}$\;
partition $V(T)$ arbitrarily into $k$ equal-size sets: $S_{1},...,S_{k}$\;
run \textit{$MakeDensePair(\{S_{1},...,S_{k}\},P_{k}, n)$} to get $(X,Y)$, where $X,Y \subseteq V(T)$\;
\If {$r=2$} {
  output $(X,Y)$\;
}
initialize: $\mathcal{L} \leftarrow \emptyset$, $\mathcal{R} \leftarrow \emptyset$\;
let $s_{1}=|X|$ and $s_{2}=|Y|$\;
\While{$|X| \geq \frac{s_{1}}{2}$}{
  run $CreateSequence(\frac{r}{2},T|X)$ to obtain an $\alpha$-sequence $L$ of length $\frac{r}{2}$\;
  update: $X \leftarrow X \setminus V(L)$, $\mathcal{L} \leftarrow \mathcal{L} \cup \{L\}$\;
}
\While{$|Y| \geq \frac{s_{2}}{2}$}{
  run $CreateSequence(\frac{r}{2},T|Y)$ to obtain an $\alpha$-sequence $R$ of length $\frac{r}{2}$\;
  update: $Y \leftarrow Y \setminus V(R)$, $\mathcal{R} \leftarrow \mathcal{R} \cup \{R\}$\;
}
let $\lambda = \frac{1}{32k^{4}}$\, $c_{r} = (\frac{c}{2})^{\log(r)-1}c$ and $m=\frac{cn}{2} c_{\frac{r}{2}}$, where $c=\frac{\lambda^{k}}{k^{2}}$\;
\eIf{exists $L \in \mathcal{L}$, $R \in \mathcal{R}$ such that $d(V(L),V(R)) \geq 1 - 4\lambda$}{
 output $tr(L \otimes R, m)$\;
}{
 output $\theta_{1} \otimes \theta_{2}$, for arbitrary: $\theta_{1} \in \mathcal{L}$ and $\theta_{2} \in \mathcal{R}$\;
}
}
\caption{Algorithm \textit{CreateSequence}}
\label{createsequence}
\end{algorithm}

Let us assume now that a smooth $\alpha$-sequence of length $k$ is given.
The algorithm \textit{FindTrans} tries to reconstruct a directed path $P_{k}$ by looking for its $ith$ vertex in the matching ordering of $P_{k}$ in the
$ith$ element of the $\alpha$-sequence $\theta_{s}$. This is conducted backward edge by backward edge in the matching ordering of $P_{k}$. If the backward edge 
is not found then two linear-size subsets $A_{u},A_{v}$ of two distinct elements from the original $\alpha$-sequence such that $A_{u}$ is complete to $A_{v}$ are detected. 
Otherwise an $\alpha$-sequence is updated.
The update is done in such a way that if in the new $\alpha$-sequence the other backward edges of the matching ordering of $P_{k}$ are found then
they can be combined with the edges that were already found to reconstruct a copy of $P_{k}$.
Since a tournament $T$ the algorithm is working on is $P_{k}$-free, at some point of its execution two subsets $A_{u},A_{v}$ mentioned above with $d(A_{u},A_{v})=1$ will be detected.
When that happens, the algorithm is run recursively on the tournaments: $T|X$ and $T|Y$ and later two transitive subsets found in these two recursive runs are merged.

Let us discuss now subprocedure \textit{CreateSequence} (Algorithm \ref{createsequence}) that constructs an $\alpha$-sequence of a specified length $r$
(without loss of generality we will assume that $r=2^{w}$ for some $w>0$). 
As mentioned before, the procedure can be applied for any forbidden pattern, not only $P_{k}$. 
Its main ingredient is called \textit{MakeDensePair} and is responsible for constructing two disjoint
linear sets $X,Y$ in the $P_{k}$-free tournament such that the directed density $d(X,Y)$ is very close to one.

\begin{algorithm}
\SetAlgoLined
\textbf{Input: } a set $\{S_{i_{1}},...,S_{i_{p}}\}$ such that $S_{i_{1}} \cup ... \cup S_{i_{p}}$ induces a $P_{k}$-free tournament, a $p$-vertex tournament $H$ with $V(H)=\{h_{i_{1}},..,h_{i_{p}}\}$
                       and parameter $n$\;
\textbf{Output: } a pair of disjoint sets $(X,Y)$\;
\Begin{
let $\lambda = \frac{1}{32k^{4}}$ and $m=\frac{\lambda^{k}}{k^{2}}n$\;
for each $v \in S_{i_{1}}$ and $j \in \{i_{2},...,i_{p}\}$ let $N(v,S_{j})$ be: \\
$\empty$ $\empty$ $\empty$ a set of outneighbors of $v$ from $S_{j}$ if $(h_{i_{1}},h_{j})$ is an edge and:\\ 
$\empty$ $\empty$ $\empty$ a set of inneighbors of $v$ from $S_{j}$ otherwise\;
let $bad(v)$ be: an arbitrary  $j \in \{i_{2},...,i_{p}\}$ such that $|N(v,S_{j})| < \lambda |S_{j}|$ or \\ 
$\empty$ $\empty$ $\empty$ $\empty$ $\empty$ $\empty$ $\empty$ $\empty$ $\empty$ $\empty$ $\empty$ $\empty$ $\empty$ $\empty$ 
$\empty$ $\empty$ $\empty$ $\empty$ $\empty$ $0$ if such a $j$ does not exist\;
\eIf {there exists $v_{0} \in S_{i_{1}}$ such that $bad(v_{0})=0$}
{
update: $S_{j} \leftarrow tr(N(v_{0},S_{j}), \lambda |S_{j}|)$ for $j \in \{i_{2},...,i_{p}\}$\;
let $\mathcal{S}_{new} \leftarrow \{S_{j}: j \in \{i_{2},...,i_{p}\}\}$\;
run MakeDensePair($\mathcal{S}_{new}$, $H \setminus \{h_{i_{1}}\}$, n)\;
}
{
  let $P_{j}=\{v \in S_{i_{1}}: bad(v)=j\}$ for $j \in \{i_{2},...,i_{p}\}$\;
  let $j_{0} =  \arg \max_{j \in \{i_{2},...,i_{p}\}} |P_{j}|$ and $P_{j_{0}}^{t}=tr(P_{j_{0}},m)$\;
  let $\{W_{1},W_{2},...\}$ be a partitioning of $S_{j_{0}}$ into sets of size $m$\; 
  \eIf{$d(P_{j_{0}},S_{j_{0}}) \geq \frac{1}{2}$}
  {
    output $(P_{j_{0}}^{t},W_{l_{max}})$, where $l_{max} = \arg \max_{l} d(P_{j_{0}}^{t},W_{l})$\; 
  }
  {
    output $(W_{l_{max}},P_{j_{0}}^{t})$, where $l_{max} = \arg \max_{l} d(W_{l}, P_{j_{0}}^{t})$\;
  }
}
}
\caption{Algorithm \textit{MakeDensePair}}
\label{makedensepair}
\end{algorithm}

The procedure \textit{CreateSequence} acts as follows.
First two linear sets $X,Y$ of the $P_{k}$-free tournament and with $d(X,Y) \geq 1 - \lambda$ for some $0<\lambda \ll 1$ are found with the use of 
the procedure \textit{MakeDensePair}. If $r=2$ then $(X,Y)$ is output and the procedure is ends. Otherwise, in both $X$ and $Y$ the $\alpha$-sequences of length $\frac{r}{2}$ are constructed
recursively. When the sequence is constructed, it is deleted from $X$ or $Y$ and a new sequence is being constructed in the remaining set. This is repeated as long there are at least
half of the vertices left in $X$ or $Y$. Let $X_{1},X_{2},...$ denote the sets of the vertices of the $\alpha$-sequences constructed in $X$ and let 
$Y_{1},Y_{2},...$ denote the sets of the vertices of the $\alpha$-sequences constructed in $Y$.
The algorithm is looking for sets $X_{i}$,$Y_{j}$ such that $d(X_{i},Y_{j}) \geq 1 - 4\lambda$.
The way sets $X,Y$ were constructed by \textit{MakeDensePair} as well as simple density arguments (see: the analysis of the algorithm) 
imply that such sets do exist. Thus even though in the formal description of  \textit{CreateSequence} we assume that the sets may not be found 
(and then two arbitrary sets $X_{i}$, $Y_{j}$) are taken, this in fact will never happen.
The $\alpha$-sequence of length $r$ is output simply by combining two $\alpha$-sequences of length $\frac{r}{2}$ corresponding to $X_{i}$ and $Y_{j}$.

It remains to explain how the procedure \textit{MakeDensePair} works (Algorithm \ref{makedensepair}).
The procedure is given a set of sets $S_{i_{1}},...,S_{i_{p}} \subseteq V(T)$ of linear size each, for some $1 < p \leq k$, a $p$-vertex tournament $H=\{h_{i_{1}},...,h_{i_{p}}\}$,
and a parameter $n$. Parameter $n$ is the remembered size of the tournament which is an input of
the \textit{CreateSequence} procedure initializing the recursive runs of  \textit{MakeDensePair}.

Notice that $T|S_{i_{1}} \cup ... \cup S_{i_{p}}$ is $H$-free.
The procedure tries to reconstruct $H$ in $T|S_{i_{1}} \cup ... \cup S_{i_{p}}$ in such a way that $h_{i_{j}}$ is found in $S_{i_{j}}$.
It first verifies whether a good candidate for $h_{i_{1}}$ exists in $S_{i_{1}}$.
A good candidate should have substantial number of outneighbors in each $S_{i_{j}}$ such that $(h_{i_{1}},h_{i_{j}})$ is an edge in $H$ and
a substantial number of inneighbors in each $S_{i_{j}}$ such that $(h_{i_{j}},h_{i_{1}})$ is an edge in $H$.
If such a vertex $v$ in $S_{i_{1}}$ is found then the remaining sets are modified accordingly and the algorithm tries to reconstruct 
$H \setminus \{h_{i_{1}}\}$ in their modified versions. This is done by a recursive run of the procedure on the set of modified sets $S_{i_{2}},...,S_{i_{p}}$. 
Since a tournament that the procedure operates on is $H$-free, at some recursive run
no good candidate will be found. As we will see in the theoretical analysis, it will imply (by Pigeonhole Principle) the existence of two linear-size sets $X,Y$ with
density $d(X,Y)$ close to one. These sets will be output by the procedure. 

\begin{algorithm}
\SetAlgoLined
\textbf{Input: } $k>0$ and $P_{k}$-free tournament $T$\;
\textbf{Output: } an acyclic coloring of $T$ using $|T|^{1-\frac{c}{k\log(k)^{2}}}$ colors\;
\Begin{
initialize: $G \leftarrow T$, $\mathcal{P} \leftarrow \emptyset$\;
\While{$V(G) \neq \emptyset$}{
  run \textit{FindTrans(k, G)} to obtain a transitive set $M$ in $G$\;
  update: $\mathcal{P} \leftarrow \mathcal{P} \cup \{M\}$, $G \leftarrow G \setminus M$\;
}
color each set of $\mathcal{P}$ with different color and output this coloring\;
}
\caption{Algorithm \textit{AcyclicColoring}}
\label{alg-color}
\end{algorithm}

The use of parameter $n$ enables us to output two sets of the same desired
size. This balanceness will play important role in the theoretical analysis of the procedure \textit{CreateSequence} that uses \textit{MakeDensePair}.

\subsection{Algorithm  \textit{AcyclicColoring}}

The acyclic coloring algorithm (Algorithm \ref{alg-color}) is a simple wrapper for the \textit{FindTrans} procedure. It runs this procedure several times to obtain the partitioning of the $P_{k}$-
free tournament $T$ into transitive sets.
Each transitive set gets its own color and this coloring is as an acyclic coloring that is being output.

\section{Analysis}

\subsection{Introduction}

To show that presented algorithms are correct we need to prove Theorem \ref{alg-theorem1} and Theorem \ref{alg-theorem2}.
Let us assume first that Theorem \ref{alg-theorem1} is true. Under this assumption it is easy to prove Theorem \ref{alg-theorem2}.

\Proof
Let $\epsilon = \frac{c}{k\log(k)^{2}}$, where $k$ is as in Theorem \ref{alg-theorem1}.
The Algorithm \ref{alg-color} keeps finding transitive subtournaments of order at least $(\frac{n}{2})^{\epsilon}$ as long there are at least $\frac{n}{2}$ vertices left in the tournament. 
By the time the algorithm reaches the state with less than $\frac{n}{2}$ vertices remaining, at most $O(n^{1-\epsilon})$ transitive subtournaments are found.
Then the algorithm is run on the remaining graph of less than $\frac{n}{2}$ vertices. The algorithm stops when there are no vertices left. When it happens all the vertices of the tournament
are partitioned into transitive subsets.
If we denote by $H(n)$ the total number of the transitive subtournaments found then we have the following simple recurrence formula: $H(n) \leq O(n^{1-\epsilon}) + H(\frac{n}{2})$, which immediately gives us: 
$H(n) = O(n^{1-\epsilon})$. Thus we obtain desired approximation of the acyclic coloring problem.
Since finding each transitive subset takes quadratic time and at most $O(n^{1-\epsilon})$ transitive subsets are constructed, the total running time of the coloring algorithm is as stated in Theorem \ref{alg-theorem2}.
\bbox

Theorem \ref{alg-theorem1} is a result of the series of lemmas:

\begin{lemma}
\label{lemma1}
Let $\lambda = \frac{1}{32k^{4}}$.
A run of Algorithm \textit{MakeDensePair} from \textit{CreateSequence} outputs two disjoint subsets $X,Y$ of the given $n$-vertex tournament such that $d(X,Y) \geq 1 - \lambda$
and $|X|=|Y| = cn$ where: $c = \frac{\lambda^{k}}{k^{2}}$, provided that $n > k$. 
\end{lemma}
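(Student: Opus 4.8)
The plan is to unwind the recursion of \textit{MakeDensePair} and to maintain a structural invariant which, together with $P_{k}$-freeness, forces the procedure into its ``no good candidate'' branch before the working sets shrink too much; that branch then outputs $(X,Y)$ of the required form directly.

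First I set up the recursion and the size bookkeeping. \textit{CreateSequence} calls \textit{MakeDensePair} with the $k$ equal parts $S_{1},\ldots,S_{k}$ of $V(T)$, each of size at least $\lfloor n/k\rfloor$, with $H=P_{k}$ on $\{h_{1},\ldots,h_{k}\}$, and with the parameter $n$; since $m=cn$, $c=\lambda^{k}/k^{2}$, and $n$ is never modified in the recursive calls, whatever is produced in the ``no good candidate'' branch automatically has size exactly $cn$ on both sides. Along any branch of the recursion, passing once through the ``good candidate found'' alternative replaces every surviving $S_{j}$ by a subset of size $\lambda|S_{j}|$ (this is legitimate because $bad(v_{0})=0$ guarantees $|N(v_{0},S_{j})|\ge\lambda|S_{j}|$), so at recursion depth $t$ the surviving sets have size at least $\lambda^{t-1}\lfloor n/k\rfloor$, up to the additive rounding of $tr$. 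As $t\le k$, all these sets stay of size at least $m$ --- hence nonempty, so the block partition of $S_{j_{0}}$ and the truncations $tr(\cdot,m)$ are well defined --- provided $n$ is large enough in terms of $k$, which is exactly the regime in which \textit{FindTrans} ever calls \textit{CreateSequence}.

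Next, the invariant: when the depth-$t$ call is entered, letting $a_{s}$ be the vertex chosen as $v_{0}$ in the ``good candidate'' alternative at depth $s<t$, we have (i) $\{a_{1},\ldots,a_{t-1}\}$ induces in $T$ the sub-tournament of $P_{k}$ on $\{h_{1},\ldots,h_{t-1}\}$, with $a_{s}$ playing $h_{s}$, and (ii) for every surviving index $j$ and every $w$ in the current $S_{j}$, the orientation between $w$ and each $a_{s}$ matches the orientation between $h_{j}$ and $h_{s}$ in $P_{k}$. This follows by induction on $t$: the update $S_{j}\leftarrow tr(N(v_{0},S_{j}),\lambda|S_{j}|)$ retains only those $w$ that are outneighbours of $a_{t}=v_{0}$ when $(h_{t},h_{j})\in E(P_{k})$ and inneighbours otherwise, which is precisely clause (ii) for $s=t$; clauses (i) and (ii) for $s<t$ persist because the update only deletes vertices, and $a_{t}$ itself lies in the depth-$t$ set, so it already satisfies (ii). I then claim the ``no good candidate'' branch is reached at some depth $t^{*}\le k-1$: otherwise the procedure passes through the ``good candidate'' alternative at depths $1,\ldots,k-1$, producing $a_{1},\ldots,a_{k-1}$, and the size bound makes the depth-$k$ surviving set $S_{k}$ nonempty, so any $a_{k}\in S_{k}$ completes, via the invariant, an induced copy of $P_{k}$ in $T$ --- contradicting $P_{k}$-freeness. (This is also what prevents the recursion from reaching the degenerate call on an empty family of sets.)

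Finally, the density estimate at depth $t^{*}$. Writing $\{i_{1},\ldots,i_{p}\}$, $p\ge 2$, for the surviving indices, every $v\in S_{i_{1}}$ has $bad(v)\ne 0$, so by the pigeonhole principle some class $P_{j_{0}}$ has $|P_{j_{0}}|\ge|S_{i_{1}}|/(p-1)\ge|S_{i_{1}}|/(k-1)\ge m$ by the size bound. For every $v\in P_{j_{0}}$ we have $|N(v,S_{j_{0}})|<\lambda|S_{j_{0}}|$. If $(h_{i_{1}},h_{j_{0}})\in E(P_{k})$ then $v$ has fewer than $\lambda|S_{j_{0}}|$ outneighbours in $S_{j_{0}}$, hence at least $(1-\lambda)|S_{j_{0}}|$ inneighbours there; averaging over $P_{j_{0}}$ gives $d(P_{j_{0}},S_{j_{0}})\le\lambda<\frac{1}{2}$, so the procedure outputs $(W_{l_{max}},P_{j_{0}}^{t})$, and averaging the per-vertex bound over $v\in P_{j_{0}}^{t}$ gives $d(S_{j_{0}},P_{j_{0}}^{t})\ge 1-\lambda$; since $S_{j_{0}}$ is a disjoint union of equal-size blocks $W_{l}$ and $l_{max}$ maximises $d(W_{l},P_{j_{0}}^{t})$, that maximum is at least the block-average, so $d(W_{l_{max}},P_{j_{0}}^{t})\ge 1-\lambda$. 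If instead $(h_{j_{0}},h_{i_{1}})\in E(P_{k})$, the roles of in- and out-neighbours are exchanged, $d(P_{j_{0}},S_{j_{0}})\ge 1-\lambda\ge\frac{1}{2}$, the procedure outputs $(P_{j_{0}}^{t},W_{l_{max}})$, and the symmetric averaging gives $d(P_{j_{0}}^{t},W_{l_{max}})\ge 1-\lambda$. Either way the output $(X,Y)$ satisfies $|X|=|Y|=m=cn$ and $d(X,Y)\ge 1-\lambda$. The genuinely delicate point is the invariant-and-$P_{k}$-freeness step: one must check that the chain of ``good candidate'' updates assembles an honest induced copy of an initial segment of $P_{k}$ (which is exactly why $N(v,S_{j})$ is defined through the orientation of the corresponding edge of $H$), and that the size accounting still leaves a vertex to choose as $v_{0}$ all the way down to that branch. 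The density bound is then a short averaging argument, the only mild nuisance being integer rounding in $tr$ and in the block partition, which is absorbed by taking $n$ large in terms of $k$.
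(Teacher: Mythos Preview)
Your proof is correct and follows essentially the same approach as the paper: bound the recursion depth via $P_{k}$-freeness (by showing the chosen good candidates assemble an initial segment of $P_{k}$), track the geometric shrinkage of the $S_{j}$ to keep them of size at least $m$, and at the terminal step use pigeonhole plus averaging over the equal-size blocks $W_{l}$ to extract the dense pair. Your invariant (i)--(ii) makes explicit what the paper leaves to the reader, and your case split on the orientation of $(h_{i_{1}},h_{j_{0}})$ matches the paper's two bullet points; otherwise the arguments coincide.
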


The next lemma gives us the parameters of the $\alpha$-sequence constructed by procedure \textit{CreateSequence}.

\begin{lemma}
\label{lemma2}
Let $\lambda = \frac{1}{32k^{4}}$.
If $n>\frac{k}{c_{r}}$ then Algorithm \textit{CreateSequence} constructs a $(c_{r},\lambda_{r})$-$\alpha$-sequence of length $r$ in the given $n$-vertex tournament,
where: $c_{r} = c \cdot (\frac{c}{2})^{\log(r)-1}$, $c=\frac{\lambda^{k}}{k^{2}}$, $\lambda_{r} = 4\lambda r^{2}$ for $r>2$
and $\lambda_{2}=\lambda$. Furthermore, each element of the constructed $\alpha$-sequence is of the same size $c_{r} n$.
\end{lemma}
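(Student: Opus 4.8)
The plan is to induct on $r$ over powers of two, unwinding one level of the recursion of \textit{CreateSequence} at each step; only the two defining conditions of a $(c_r,\lambda_r)$-$\alpha$-sequence matter, not smoothness. For the base case $r=2$, \textit{CreateSequence} returns the pair $(X,Y)$ produced by \textit{MakeDensePair}. Since $c<1$ we have $n>\tfrac{k}{c_2}=\tfrac{k}{c}>k$, so Lemma~\ref{lemma1} applies and gives $|X|=|Y|=cn=c_2 n$ and $d(X,Y)\ge 1-\lambda=1-\lambda_2$; thus $(X,Y)$ is a $(c_2,\lambda_2)$-$\alpha$-sequence of length $2$ with both elements of size $c_2 n$, as required.

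For the inductive step, assume the statement for $\tfrac r2$ and let $T$ be a $P_k$-free tournament on $n>\tfrac{k}{c_r}$ vertices. First I would record that \textit{MakeDensePair} outputs disjoint $X,Y$ with $|X|=|Y|=cn$ and $d(X,Y)\ge 1-\lambda$ (Lemma~\ref{lemma1}, valid since $n>\tfrac{k}{c_r}>k$). Next I would check that every recursive call $CreateSequence(\tfrac r2,T|X')$ made inside the two loops that build $\mathcal L$ and $\mathcal R$ runs on a set with $|X'|\ge\tfrac{cn}{2}$; using the identity $c_r=\tfrac c2\,c_{r/2}$ (immediate from the definition of $c_r$), the inequality needed to invoke the induction hypothesis, $\tfrac{cn}{2}>\tfrac{r/2}{c_{r/2}}$, is equivalent to $n>\tfrac{r}{2c_r}$, which follows from $n>\tfrac{k}{c_r}\ge\tfrac{r}{c_r}$. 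Hence each $L\in\mathcal L$ and each $R\in\mathcal R$ is a $(c_{r/2},\lambda_{r/2})$-$\alpha$-sequence of length $\tfrac r2$ all of whose $\tfrac r2$ elements share a common size $c_{r/2}|X'|$ lying in $[m,2m]$, where $m=\tfrac{cn}{2}c_{r/2}$ (since $\tfrac{cn}{2}\le|X'|\le cn$). Finally, because each loop exits only after more than half of $X$ (resp.\ $Y$) has been removed, $\sum_{L\in\mathcal L}|V(L)|>\tfrac{cn}{2}$ and $\sum_{R\in\mathcal R}|V(R)|>\tfrac{cn}{2}$.

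The step I expect to be the main obstacle is showing that the test ``there exist $L\in\mathcal L$, $R\in\mathcal R$ with $d(V(L),V(R))\ge 1-4\lambda$'' always succeeds, so that the fallback branch of \textit{CreateSequence} (outputting an arbitrary $\theta_1\otimes\theta_2$) is never executed. I would argue by contradiction: if every pair failed, then for each $(L,R)$ the number of edges directed from $V(R)$ to $V(L)$ exceeds $4\lambda|V(L)||V(R)|$; summing over all pairs and using that the $V(L)$ are pairwise disjoint subsets of $X$ and the $V(R)$ pairwise disjoint subsets of $Y$, the number of edges directed from $Y$ to $X$ would exceed $4\lambda\bigl(\sum_L|V(L)|\bigr)\bigl(\sum_R|V(R)|\bigr)>4\lambda\cdot\tfrac{cn}{2}\cdot\tfrac{cn}{2}=\lambda|X||Y|$, contradicting $d(X,Y)\ge 1-\lambda$. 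The delicate point is keeping the bounds $\sum_L|V(L)|,\sum_R|V(R)|>\tfrac{cn}{2}$ honest and not letting them be diluted by the slack between $|X'|$ and $cn$.

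With a good pair $(L,R)$ in hand, the output is $tr(L\otimes R,m)$; since every element of $L$ and of $R$ has size at least $m$, the truncation is well defined, the output has length $r$, and each element has size exactly $m=\tfrac{cn}{2}c_{r/2}=c\bigl(\tfrac c2\bigr)^{\log r-1}n=c_r n$. It remains to verify the two $\alpha$-sequence conditions for the output, whose elements I write as $X_{i_1},\dots,X_{i_{r/2}},Y_{j_1},\dots,Y_{j_{r/2}}$. Between two elements both from $L$ (or both from $R$), the pre-truncation density is $\ge 1-\lambda_{r/2}$, so the two sets (of sizes in $[m,2m]$) span at most $\lambda_{r/2}(2m)^2=4\lambda_{r/2}m^2$ backward edges, a count that does not grow when we pass to size-$m$ subsets, giving truncated density $\ge 1-4\lambda_{r/2}\ge 1-\lambda_r$ (for $\tfrac r2>2$ one has $4\lambda_{r/2}=4\lambda r^2=\lambda_r$, and for $\tfrac r2=2$ one has $4\lambda_{r/2}=4\lambda\le\lambda_4=\lambda_r$); and between an element of $L$ and one of $R$, $d(V(L),V(R))\ge 1-4\lambda$ together with $|V(L)|,|V(R)|\le\tfrac r2\cdot 2m=rm$ bounds the backward edges between any such pair by $4\lambda(rm)^2=\lambda_r m^2$, whence the truncated density is $\ge 1-\lambda_r$. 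This shows $tr(L\otimes R,m)$ is a $(c_r,\lambda_r)$-$\alpha$-sequence of length $r$ with all elements of size $c_r n$, closing the induction.
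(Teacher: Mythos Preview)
Your proof is correct and follows essentially the same approach as the paper's: induction on $r$, the averaging/counting argument to locate a good pair $(L,R)$ (the paper phrases this via the density lemma applied to $X_1=\bigcup_L V(L)$ and $Y_1=\bigcup_R V(R)$, you phrase it by summing backward edges---these are the same computation), and then the $1/2$-factor loss from truncation giving the recursion $\lambda_r=\max(4\lambda_{r/2},\,4\lambda r^2)$.

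One small slip: when you check that the induction hypothesis applies to the recursive calls, the inequality you actually need is $\tfrac{cn}{2}>\tfrac{k}{c_{r/2}}$ (the lemma's size condition has the fixed $k$ from $P_k$-freeness in the numerator, not the running parameter $r/2$). Using $c_r=\tfrac{c}{2}\,c_{r/2}$ this is \emph{equivalent} to $n>\tfrac{k}{c_r}$, i.e.\ exactly the hypothesis, so the fix is immediate and in fact cleaner than the detour through $n>\tfrac{r}{2c_r}$.
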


The parameters of the smooth $\alpha$-sequence produced by \textit{MakeSmooth} from the input $\alpha$-sequence are given in the next lemma:

\begin{lemma}
\label{lemma3}
Let $\lambda = \frac{1}{32k^{4}}$.
Assume that the input to the Algorithm  $MakeSmooth$ is a $(c_{k},\lambda_{k})$-$\alpha$-sequence of length $k$ for some $c_{k}>0$ and $\lambda_{k}=4\lambda k^{2}$.
Then Algorithm $MakeSmooth$ from $\textit{FindTrans}$ procedure constructs a smooth $(\frac{c_{k}}{2},\lambda_{f})$-$\alpha$-sequence,
where: $\lambda_{f} = 4k \lambda_{k}$.
\end{lemma}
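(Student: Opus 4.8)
The plan is to analyze the single cleanup pass that \textit{MakeSmooth} performs. Recall it forms, for each ordered pair $(i,j)$ with $i\neq j$, the ``bad'' set $C_{i,j}$ of vertices $v\in A_i$ whose neighbourhood $N^{\theta}_{v}(j)$ (out-neighbours in $A_j$ when $j>i$, in-neighbours in $A_j$ when $j<i$) is smaller than $|A_j|(1-2k\lambda_k)$, and then outputs $A'_i = A_i\setminus\bigcup_{j\neq i}C_{i,j}$. I would prove two statements: (a) each trimmed set still has size at least $\tfrac{c_k}{2}|T|$, and (b) after trimming every surviving vertex is ``good'' towards every surviving set, which is exactly smoothness with parameter $\lambda_f=4k\lambda_k$.

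For (a), fix $i$ and $j\neq i$; assume $i<j$, the case $i>j$ being symmetric with the roles of in- and out-neighbours exchanged. Since $(A_1,\dots,A_k)$ is a $(c_k,\lambda_k)$-$\alpha$-sequence we have $d(A_i,A_j)\ge 1-\lambda_k$, so the number of edges directed from $A_j$ back to $A_i$ is at most $\lambda_k|A_i||A_j|$. Every $v\in C_{i,j}$ contributes at least $2k\lambda_k|A_j|$ such backward edges (its non-out-neighbours in $A_j$), so double counting gives $|C_{i,j}|\cdot 2k\lambda_k|A_j|\le\lambda_k|A_i||A_j|$, i.e. $|C_{i,j}|\le\tfrac{|A_i|}{2k}$. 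Summing over the at most $k-1$ values of $j$ yields $\bigl|\bigcup_{j\neq i}C_{i,j}\bigr|<\tfrac{|A_i|}{2}$, hence $|A'_i|>\tfrac{|A_i|}{2}\ge\tfrac{c_k}{2}|T|$. The same estimate also shows $|A_j\setminus A'_j|\le\tfrac{|A_j|}{2}$ for every $j$, a fact I will reuse below.

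For (b), take $x\in A'_i$ and $j>i$. By construction $x\notin C_{i,j}$, so $x$ has more than $(1-2k\lambda_k)|A_j|$ out-neighbours in $A_j$; in particular at most $2k\lambda_k|A_j|$ vertices of $A_j$, and a fortiori of $A'_j\subseteq A_j$, fail to be out-neighbours of $x$. Dividing by $|A'_j|\ge\tfrac{|A_j|}{2}$ gives $d(\{x\},A'_j)\ge 1-\tfrac{2k\lambda_k|A_j|}{|A'_j|}\ge 1-4k\lambda_k=1-\lambda_f$, the first smoothness condition. The second is symmetric: for $y\in A'_j$ with $i<j$, having $y\notin C_{j,i}$ forces $y$ to have more than $(1-2k\lambda_k)|A_i|$ in-neighbours in $A_i$, so at most $2k\lambda_k|A_i|$ vertices of $A'_i$ are not in-neighbours of $y$, and dividing by $|A'_i|\ge\tfrac{|A_i|}{2}$ gives $d(A'_i,\{y\})\ge 1-\lambda_f$. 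Finally, averaging the first inequality over $x\in A'_i$ recovers the plain density bound $d(A'_i,A'_j)\ge 1-\lambda_f$, so $(A'_1,\dots,A'_k)$ is a bona fide smooth $(\tfrac{c_k}{2},\lambda_f)$-$\alpha$-sequence.

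The argument is essentially a non-edge count followed by Pigeonhole, so I do not expect a genuine obstacle; the only point requiring care is the direction bookkeeping, since the meaning of $N^{\theta}_{v}(j)$ — and hence of ``bad'' — flips according to whether $j>i$ or $j<i$, and one must keep the two symmetric cases straight when translating between ``few good neighbours of $v$ in $A_j$'' and ``edges counted by $1-d$''.
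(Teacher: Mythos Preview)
Your proof is correct and follows essentially the same approach as the paper: bound each $|C_{i,j}|\le |A_i|/(2k)$ by a double count of wrong-direction edges (the paper phrases this as an appeal to its density lemma), sum to see that each $A_i$ loses at most half its vertices, and then use that surviving vertices are not in any $C_{i,j}$ together with $|A'_j|\ge |A_j|/2$ to obtain the smoothness parameter $4k\lambda_k$. Your write-up is in fact more explicit than the paper's about the direction bookkeeping and about why the factor $2$ appears when passing from $A_j$ to $A'_j$.
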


The proof of Theorem \ref{alg-theorem1} as well as the proofs of the above lemmas are given in the next subsection.

\subsection{Proof of Theorem \ref{alg-theorem1}}

We start with the following simple lemma.

\begin{lemma}
\label{densitylemma}
Let $T$ be a tournament.
Assume that for two disjoint subsets $X,Y \subseteq V(T)$ the following holds: $d(X,Y) \geq 1 - \lambda$ for some $\lambda < 1$. Assume that $X_{1} \subseteq X$, $Y_{1} \subseteq Y$, $X_{1} \geq c_{1} |X|$, $Y_{1} \geq c_{2} |Y|$ for some $0  < c_{1},c_{2} < 1$. Then $d(X_{1},Y_{1}) \geq 1 - \frac{\lambda}{c_{1}c_{2}}$.
\end{lemma}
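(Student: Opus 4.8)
The plan is to prove this by a direct edge-counting argument, exploiting the fact that in a tournament every ordered pair of vertices from $X \times Y$ is resolved by exactly one directed edge. First I would restate the hypothesis in terms of the reverse edges: since $d(X,Y) = \frac{e(X,Y)}{|X||Y|} \geq 1-\lambda$, we have $e(X,Y) \geq (1-\lambda)|X||Y|$, and because $T$ is a tournament, for each pair $(x,y) \in X \times Y$ exactly one of $(x,y),(y,x)$ is an edge; hence $e(Y,X) = |X||Y| - e(X,Y) \leq \lambda |X||Y|$. In words, there are at most $\lambda|X||Y|$ \emph{bad} pairs, namely pairs on which the edge points from $Y$ into $X$.

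Next I would restrict attention to $X_{1} \times Y_{1}$. Every bad pair lying inside $X_{1} \times Y_{1}$ is in particular a bad pair inside $X \times Y$, so $e(Y_{1},X_{1}) \leq e(Y,X) \leq \lambda|X||Y|$. Applying the tournament identity $e(A,B)+e(B,A)=|A||B|$ once more, this time to the disjoint sets $X_{1}$ and $Y_{1}$, yields $e(X_{1},Y_{1}) = |X_{1}||Y_{1}| - e(Y_{1},X_{1}) \geq |X_{1}||Y_{1}| - \lambda|X||Y|$.

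Finally I would divide by $|X_{1}||Y_{1}|$ (which is positive, since $c_{1},c_{2}>0$ and $X,Y$ are nonempty whenever $d(X,Y)$ is defined) and use the size bounds $|X_{1}| \geq c_{1}|X|$ and $|Y_{1}| \geq c_{2}|Y|$:
\[
d(X_{1},Y_{1}) = \frac{e(X_{1},Y_{1})}{|X_{1}||Y_{1}|} \geq 1 - \frac{\lambda|X||Y|}{|X_{1}||Y_{1}|} \geq 1 - \frac{\lambda|X||Y|}{c_{1}|X|\cdot c_{2}|Y|} = 1 - \frac{\lambda}{c_{1}c_{2}},
\]
which is exactly the claimed inequality.

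There is no genuinely hard step here: the only thing to be careful about is the bookkeeping with the identity $e(A,B)+e(B,A)=|A||B|$ for disjoint vertex sets in a tournament, together with the monotonicity observation that the count of reverse edges can only decrease when we pass to subsets. The estimate is worst-case tight — all $\lambda|X||Y|$ reverse edges could in principle be concentrated inside $X_{1}\times Y_{1}$ — so one cannot do better than $1-\lambda/(c_{1}c_{2})$ in general, which is why the lemma is stated in this form.
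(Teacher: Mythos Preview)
Your proof is correct and takes essentially the same approach as the paper: bound the number of reverse edges $e(Y,X)\leq\lambda|X||Y|$, use the monotonicity $e(Y_{1},X_{1})\leq e(Y,X)$, and convert back to a density bound via the size hypotheses. The only cosmetic difference is that the paper phrases the final step as a contradiction (assuming $d(X_{1},Y_{1})<1-\lambda/(c_{1}c_{2})$ and deriving $e(Y,X)>\lambda|X||Y|$), whereas you compute the inequality directly.
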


\Proof
Let $e_{Y,X}$ be the number of directed edges from $Y$ to $X$ and let $e_{Y_{1},X_{1}}$ be the number of directed edges from $Y_{1}$ to $X_{1}$.
We have: $$e_{Y,X}=(1-d(X,Y))|X||Y| \leq \lambda|X||Y|,$$ since $d(X,Y) \geq 1 - \lambda$. Similarly: $e_{Y_{1},X_{1}}=(1-d(X_{1},Y_{1}))|X_{1}||Y_{1}|$.
Assume by contradiction that $d(X_{1},Y_{1}) < 1 - \frac{\lambda}{c_{1}c_{2}}$. Then, since  $X_{1} \geq c_{1} |X|$, $Y_{1} \geq c_{2} |Y|$, we have:  $e_{Y_{1},X_{1}} > \lambda |X||Y|$. Since $e_{Y,X} \geq e_{Y_{1},X_{1}}$, we get: $e_{Y,X} > \lambda |X||Y|$, contradiction.
\bbox

Let us assume that lemmas: \ref{lemma2} and \ref{lemma3} from the main body of the paper are correct.
We will first show how Theorem \ref{alg-theorem1} is implied by them. Then we will prove all three lemmas
(Lemma \ref{lemma1} will be used to prove Lemma \ref{lemma2}).
The proof of Theorem \ref{alg-theorem1} is given below. \\

\Proof

We will proceed by induction on the size of the $P_{k}$-free tournament $T$.
Let $\epsilon = \frac{C}{k\log(k)^{2}}$, where $C>0$ is a small enough universal constant.
Let us consider first the case when $|T| \leq \frac{k}{c_{k}}$, where $c_{k}$ is as in Algorithm \textit{FindTrans}.
In this setting $|T|$ is of the order $k^{C^{'} k\log(k)}$ for some universal constant $C^{'}>0$ so the output of the algorithm trivially satisfies conditions
of Theorem \ref{alg-theorem1}.
Now let us consider more interesting case when $|T| > \frac{k}{c_{k}}$. Notice then that the requirement from Lemma \ref{lemma2} regarding the size of the
input $n$-vertex $P_{k}$-free tournament is trivially satisfied.
Assuming that lemmas: \ref{lemma2} and \ref{lemma3} are true, we conclude that initially the $\alpha$-sequence $\theta_{s}$
from \textit{FindTrans} is a smooth $(\frac{c_{k}}{2}, \lambda_{f})$-$\alpha$-sequence, where: $c_{k}=c \cdot (\frac{c}{2})^{\log(k)-1}$,
$\lambda_{f} = 4k \lambda_{k}$, $\lambda_{k} = 4\lambda k^{2}$, $c=\frac{\lambda^{k}}{k^{2}}$ and $\lambda = \frac{1}{32k^{4}}$.
Now consider the for-loop in the algorithm. Notice that it cannot be the case that in each run of the loop an edge $e=(y,x)$ is found.
Indeed, assume otherwise and denote the set of edges found in all $\frac{k}{2}$ runs by $\{(y_{1},x_{1}),...,(y_{\frac{k}{2}},x_{\frac{k}{2}})\}$.
Denote by $\sigma(y_{i})$ this $j$ that satisfies: $y_{i} \in A_{j}$. Similarly, denote by $\sigma(x_{i})$ this $j$ that satisfies: $x_{i} \in A_{j}$.
Notice that the vertices $x_{1},y_{1},...,x_{\frac{k}{2}},y_{\frac{k}{2}}$ induce a copy of $P_{k}$ and besides the ordering of 
$\{x_{1},y_{1},...,x_{\frac{k}{2}},y_{\frac{k}{2}}\}$ induced by $\sigma$ is a matching ordering under which the set of backward edges is exactly:
 $\{(y_{1},x_{1}),...,(y_{\frac{k}{2}},x_{\frac{k}{2}})\}$. This is a straightforward conclusion from the way the $\alpha$-sequence $\theta_{s}$ is updated.
That however contradicts the fact that the tournament the algorithm operates on is $P_{k}$-free.
Thus we can assume that in some run of the main for-loop the algorithm recursively runs itself on $T|A_{u}$ and $T|A_{v}$ for some $A_{u},A_{v}$ from the given
$\alpha$-sequence. 
Notice that whenever a backward edge $(y,x)$ is found the size of each $A_{i}$ in the updated $\alpha$-sequence decreases by at most $2 \cdot \frac{c_{k}}{2}n \lambda_{f}$.
Thus at every stage of the execution of the algorithm each $A_{i}$ is of order at least $\frac{c_{k}}{2}n - k \cdot \frac{c_{k}}{2}n \lambda_{f}$ which is at least
$\frac{c_{k}}{4}n$ (since $\lambda_{f} = 4k \lambda_{k} \leq \frac{1}{2k}$).
Therefore when two recursive runs of the procedure \textit{FindTrans} are conducted, each run operates on the tournament of size at least $\frac{c_{k}}{4}n$.
By induction, a transitive tournament of order at least $2(\frac{c_{k}}{4}n)^{\epsilon}$ is produced.
It remains to prove that under our choice of $\epsilon$ (for $C>0$ small enough) we have: $2(\frac{c_{k}}{4}n)^{\epsilon} \geq n^{\epsilon}$, i.e
$\epsilon \leq \frac{1}{\log(\frac{4}{c_{k}})}$. We leave it to the reader.

Let us comment now on the running time of the algorithm. 
First notice that procedure \textit{MakeDensePair} runs in quadratic time. Throughout its execution it is calling itself at most $k$ times and the time it takes
between any two recursive calls is clearly at most quadratic. This in particular implies that procedure \textit{CreateSequence} also runs in quadratic time.
Indeed, throughout its execution at most $O(k)$ calls of the procedure \textit{MakeDensePair} are conducted and its other operations take altogether
at most quadratic time. 
Furthermore, Algorithm \textit{MakeSmooth} is clearly quadratic and besides a naive implementation of each run of the for-loop in the procedure \textit{FindTrans}
takes at most quadratic time. Thus Algorithm \textit{FindTrans} has quadratic running time.

\bbox

It remains to prove lemmas: \ref{lemma1}, \ref{lemma2}, \ref{lemma3}.
We start with Lemma \ref{lemma3}.

\Proof
Let $\theta=(A_{1},...,A_{k})$ be the input $\alpha$-sequence.
By Lemma \ref{densitylemma} we get: $|C_{i,j}| \leq \frac{|A_{i}|}{2k}$. Thus for any $i=1,...,k$ we have: $|\bigcup_{j \neq i} C_{i,j}| \leq \frac{|A_{i}|}{2}$.
This implies in particular that each updated $A_{i}$ is of size at least half the size of the original one.
Now take some $1 \leq i < j \leq k$ and a vertex $v \in A_{i}^{n}$, where $A_{i}^{n}$ is the new version of $A_{i}$ after the update.
By the definition of $A_{i}^{n}$ we know that $v$ has at most $2k \lambda_{k} |A_{j}|$ inneighbors from $A_{j}$.
Denote by $A_{j}^{n}$ the new version of $A_{j}$ after the update. Then we can conclude that $v$ has at most $4k \lambda_{k} |A_{j}^{n}|$
inneighbors from $A_{j}^{n}$. Similar analysis can be conducted for $1 \leq j < i \leq k$. That completes the proof.
\bbox

Now we prove Lemma \ref{lemma2} assuming that Lemma 1 is true.

\Proof
We proceed by induction on $r$. For $r=2$ Algorithm \textit{CreateSequence} is reduced to procedure \textit{MakeDensePair} thus  the result follows by
Lemma \ref{lemma1}. Let us assume now that $r>2$. 
Then, by induction and Lemma \ref{lemma1}, each element of each $\alpha$-sequence $L$ is of size at least $c_{\frac{r}{2}} \cdot \frac{cn}{2}$
and at most $c_{\frac{r}{2}} \cdot cn$.
Similarly, each element of each $\alpha$-sequence $R$ is of size at least $c_{\frac{r}{2}} \cdot \frac{cn}{2}$
and at most $c_{\frac{r}{2}} \cdot cn$.
In particular, the size of each element of an arbitrary $L \in \mathcal{L}$ is at most twice the size of each
element of an arbitrary $R \in \mathcal{R}$ and vice versa: the size of each element of an arbitrary $R \in \mathcal{R}$ is at most twice the size of each element of 
an arbitrary $R \in \mathcal{R}$.
By Lemma \ref{lemma1}, the directed density between initial sets $X$ and $Y$ is at least $1-\lambda$.
Denote $X_{1} = \bigcup_{L \in \mathcal{L}}$ and $Y_{1} = \bigcup_{R \in \mathcal{R}}$, where:
$\mathcal{L}$ and $\mathcal{R}$ are taken when both while-loops in the algorithm are completed. We trivially have:
$|X_{1}| \geq \frac{|X|}{2}$ and $|Y_{1}| \geq \frac{|Y|}{2}$. Thus by Lemma \ref{densitylemma}, we obtain:
$d(X_{1},Y_{1}) \geq  1 - 4\lambda$. 
Notice that $d(X_{1},Y_{1}) = \frac{\sum_{L \in \mathcal{L},R \in \mathcal{R}}d(V(L),V(R))|V(L)||V(R)|}{|X_{1}||Y_{1}|}$.
Let us assume first that there do not exist $L \in \mathcal{L}, R \in \mathcal{R}$ such that $d(V(L),V(R)) \geq 1-4\lambda$.
But then, by the above observation, we have: $d(X_{1},Y_{1}) <  \frac{\sum_{L \in \mathcal{L},R \in \mathcal{R}}(1-4\lambda)|V(L)||V(R)|}{|X_{1}||Y_{1}|}$.
Thus $d(X_{1},Y_{1}) < (1-4\lambda) \frac{\sum_{L \in \mathcal{L},R \in \mathcal{R}}|V(L)||V(R)|}{|X_{1}||Y_{1}|} = 1-4\lambda$, contradiction.
Therefore $\alpha$-sequences $L_{0},R_{0}$ such that $d(V(L_{0}),V(R_{0})) \geq 1-4\lambda$ will be found.
Notice that, by induction and Lemma \ref{lemma1} all elements of $L_{0}$ are of the same size. Similarly, all elements of $R_{0}$ are of the same size.
Thus, by our previous observations and Lemma \ref{densitylemma}, we can conclude that in the truncated version of the $R_{0}$-part of the output $\alpha$-sequence
the density between an element appearing earlier in the sequence and an element appearing later is at least $1-4 \lambda_{\frac{r}{2}}$.
Similarly, the directed density between an element of the final output that is from the $L_{0}$-part of the sequence and the one that is from
the $R_{0}$-part of the sequence is at least $1 - 4\lambda \cdot 4 (\frac{r}{2})^{2}$.
This leads us to the following recursive formula: $\lambda_{r} = \max(4 \lambda_{\frac{r}{2}}, 4\lambda \cdot 4 (\frac{r}{2})^{2})$ for $r>2$
and $\lambda_{2}=\lambda$. One can easily check that this recursion has a solution which is exactly of the form given in the statement of Lemma \ref{lemma2}.
Furthermore, trivially each element of the output $\alpha$-sequence is forced to be of order $\frac{cn}{2}c_{\frac{r}{2}}$, which leads to the recursive formula on
$c_{r}$ from the statement of Lemma \ref{lemma2}. 
\bbox 

It remains to prove Lemma \ref{lemma1}.

\Proof
Notice first that output sets $X$ and $Y$ are forced to be of the size given in the statement of Lemma \ref{lemma1}.
Indeed, sets: $P_{j_{0}}^{t}$ and $W_{i}$
are of size $m$ each which is exactly
$cn$ for $c=\frac{\lambda^{k}}{k^{2}}$.
The crucial observation is that the longest path in the tree of recursive calls of the procedure \textit{MakeDensePair} is of length at most
$k$. Assume otherwise and choose $k$ consecutive vertices $v_{0}$ constructed in $k$ consecutive recursive calls.
Denote these vertices as: $v_{0}^{1},...,v_{0}^{k}$. Notice that from the way each $v_{0}^{i}$ is constructed we can immediately deduce that
$\{v_{0}^{1},...,v_{0}^{k}\}$ induce a copy of $P_{k}$, contradiction.
So after the procedure \textit{MakeDensePair} is called first time by \textit{CreateSequence}, it executes at most $k$ its recursive calls.
Now notice that the size of the set $S_{i_{j}}$ from the input of the procedure decreases between its two consecutive recursive calls  exactly by 
a factor of $\frac{1}{\lambda}$. Thus when a set $P_{j_{0}}$ is found the size of $S_{j_{0}}$ is $\frac{n}{k}\lambda^{it}$, where $it\leq k$ is the number of recursive calls 
that were run.
By the definition of $P_{j_{0}}$ we have one of two possible options: 
\begin{itemize}
\item every vertex of $P_{j_{0}}$ is adjacent to at least $(1-\lambda)|S_{j_{0}}|$ vertices of $S_{j_{0}}$ or
\item every vertex of $P_{j_{0}}$ is adjacent from at least $(1-\lambda)|S_{j_{0}}|$ vertices of $S_{j_{0}}$.
\end{itemize}
\bbox
In particular we have: $d(P_{j_{0}}^{t},S_{j_{0}}) \geq 1 - \lambda$ or $d(P_{j_{0}}^{t},S_{j_{0}}) \leq \lambda$.
Assume without loss of generality that the former holds. Then by the same density argument as in the proof of Lemma \ref{lemma2} we 
can conclude that $d(P_{j_{0}}^{t},W_{l_{max}}) \geq 1 - \lambda$. 
Finally, notice that, as we have already mentioned at the very beginning of the proof, both $P_{j_{0}}^{t}$
and $W_{l_{max}}$ are of the desired length $m$. That completes the proof.

\subsection{Infinite families of $P_{k}$-free tournaments with small transitive subsets}

In this subsection we show that our results from the main body of the paper are tight up to the $\log(k)$-factor in the following sense: there exists an infinite family
of $P_{k}$-free tournaments with largest transitive subsets of order $O(n^{\frac{c\log(k)}{k}})$. Presented construction is based on
\cite{choromanski2}.
We need one more definition. Let $S, F$ be two tournaments and denote $V(S)=\{s_{1},...,s_{|S|}\}$. We denote by $S \times F$ a tournament $T$ with the vertex set
$V(T) = V_{1} \cup ... \cup V_{|S|}$, where each $V_{i}$ induces a copy of $F$ and for any $1 \leq i < j \leq |S|, x \in V_{i}, y \in V_{j}$ we have the following:
$x$ is adjacent to $y$ iff $s_{i}$ is adjacent to $s_{j}$ in $S$.

Fix $k>0$. Without loss of generality we can assume that $k>4$.
Notice first that there exists a universal constant $c>0$ and a tournament $B$ on $2^{ck}$ vertices with largest transitive subtournaments of order $k$ and that is $P_{k}$-free. 
Such a tournament may be easily constructed
randomly by fixing $2^{ck}$ vertices and choosing the direction of each edge independently at random with probability $\frac{1}{2}$ (standard probabilistic
argument shows that most of tournaments constructed according to this procedure satisfy the condition regarding sizes of their transitive subsets and $P_{k}$-freeness).

Now we define the following infinite family $\mathcal{F}$ of tournaments:
\begin{itemize}
\item $F_{0}$ is a one-vertex tournament,
\item $F_{i+1}=B \times F_{i}$ for $i=0,1,...$.
\end{itemize}

\begin{lemma}
Each tournament $F_{i} \in \mathcal{F}$ is $P_{k}$-free.
\end{lemma}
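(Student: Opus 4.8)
The plan is to prove by induction on $i$ that each $F_i$ is $P_k$-free, using the recursive structure $F_{i+1} = B \times F_i$ together with the fact that both $B$ and every $F_i$ have bounded transitive subsets. The base case $i=0$ is trivial since a one-vertex tournament contains no $P_k$ for $k>4$. For the inductive step, suppose $F_i$ is $P_k$-free and assume for contradiction that $F_{i+1}=B\times F_i$ contains an induced copy of $P_k$, say on vertices $u_1,\ldots,u_k$ with a path ordering. Write $V(F_{i+1}) = V_1 \cup \cdots \cup V_{2^{ck}}$ where each $V_m$ induces a copy of $F_i$ and the edges between distinct $V_m,V_{m'}$ mimic the edges of $B$. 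Each $u_j$ lies in some block $V_{m(j)}$, which defines a map from $V(P_k)$ to $V(B)$.

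First I would observe that the blocks behave like a homogeneous partition: for two vertices in distinct blocks the edge direction depends only on the blocks, so the image of our $P_k$ under the block map, after identifying vertices sent to the same block, is an induced subtournament of $B$. The key structural point is that within a single block $V_m$, the vertices of $P_k$ lying in $V_m$ induce a subtournament of $F_i$; since $F_i$ is $P_k$-free (induction) this already bounds things, but more usefully, any set of $P_k$-vertices inside one block must actually induce a \emph{transitive} subtournament — because any induced subtournament of $P_k$ on a consecutive-or-not vertex subset that is itself a block class of a homogeneous set in $P_k$ must be transitive. Here one uses that $P_k$ is prime (for $k\neq 4$): its only homogeneous sets are singletons, so a block $V_m$ can contain at most... — this is the place where I would instead argue directly that the preimage of each vertex of $B$ is a homogeneous set of the copy of $P_k$, hence a singleton by primality, \emph{unless} that vertex-fiber induces a transitive piece that can be absorbed. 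So each block contains a transitive subset of the $P_k$, of size at most the transitivity number of $F_i$.

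Concretely, I would set it up as: the $P_k$ restricted to each block $V_m$ is transitive (this needs the small argument that a path $P_k$ has no non-trivial homogeneous set, so consecutive vertices of $P_k$ forced into one block must form a transitive segment — in fact one shows vertices of $P_k$ in one block are pairwise "twins modulo the rest", forcing transitivity). Then the number of blocks used is at least $k / t(F_i)$ where $t(F_i)$ is the largest transitive subset size of $F_i$; contracting each block to a point exhibits an induced sub-structure of $B$ that contains a copy of $P_{k/t(F_i)}$ — and if $t(F_i) < $ (something), this copy is large, and more to the point, $B$ itself is $P_k$-free and has transitive subsets of size exactly $k$, so it cannot host the contracted path if the contracted path still has $\geq k$ vertices; and if it has fewer than $k$ vertices then the blocks together can hold at most $(k-1)\cdot t(F_i)$ vertices of $P_k$, and by a companion bound $t(F_{i+1}) \le k \cdot t(F_i)$ one derives $|V(P_k)| = k \le (k-1) t(F_i)$, a contradiction once one knows $t(F_i)$ is small relative to $k$ — but since $t(F_i)$ grows with $i$, the honest argument must avoid this and instead use $P_k$-freeness of $B$ directly: the contracted copy inside $B$, having no two vertices of $P_k$ in the same block forced once we pick one representative per block, is an induced $P_{k'}$ with $k' \le k$ inside $B$, and I would argue that primality of $P_k$ forces $k'=k$ (no proper contraction of $P_k$ is homogeneous), contradicting $B$ being $P_k$-free.

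The main obstacle I expect is exactly pinning down the claim that \emph{the vertices of an induced $P_k$ falling into a single block must induce a transitive subtournament, and that collapsing blocks yields an induced $P_k$ in $B$ whenever more than one block is used.} This is where the primality of $P_k$ (valid since $k\neq 4$) is essential: a block class is a homogeneous set of the $P_k$-copy relative to the rest of the $P_k$, hence by primality either the whole $P_k$ sits in one block — then $F_i$ contains $P_k$, contradicting induction — or every block contains at most... no: primality says homogeneous sets are singletons, so in fact \emph{each block contains at most one vertex of the $P_k$}, and then the block map is injective, so $B$ contains an induced $P_k$, contradicting $P_k$-freeness of $B$. Once this dichotomy is established cleanly the proof closes; the delicate part is verifying that a block class is genuinely a homogeneous set of $P_k$ — which follows because any $u_a, u_b$ in the same block have identical adjacency to every $u_c$ in a different block (edges between blocks depend only on the block pair), so $\{u_a,u_b\}$ extends to a homogeneous set, and primality of $P_k$ then forbids two such vertices. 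I would present the argument in this streamlined form.
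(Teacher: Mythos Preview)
Your final streamlined argument (last paragraph) is correct and matches the paper's proof exactly: each block class is a homogeneous set of the putative $P_k$-copy, so by primality of $P_k$ (valid since $k\neq 4$) either the whole copy lies in one block, contradicting the inductive hypothesis on $F_i$, or every block contains at most one vertex of the copy, giving an induced $P_k$ in $B$ and contradicting $P_k$-freeness of $B$. The detours through transitive-subset sizes $t(F_i)$ and contracted paths $P_{k'}$ are unnecessary and should be discarded.
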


\begin{proof}
The proof is by induction on $i$. Induction base is trivial. Now let us assume that all $F_{i}$s for $i \leq i_{0}$ are $P_{k}$-free and let us
take tournament $F_{i_{0}+1}$. Denote the copies of $F_{i_{0}}$ that build $F_{i_{0}+1}$ as: $T_{1},...,T_{|S|}$.
Assume by contradiction that $P$ is a subtournament of $F_{i_{0}+1}$ that is isomorphic to $P_{k}$.
Notice first that $|V(P) \cap V(T_{j})| < k$ for $j=1,...,|S|$. Indeed, that follows from the fact that clearly every $T_{j}$ is $P_{k}$-free.
Now observe that if $|V(P) \cap V(T_{j})| > 0$ then in fact $|V(P) \cap V(T_{j})| = 1$. Otherwise, by the definition of $\mathcal{F}$ and from the
previous observation we would conclude that $V(P) \cap V(T_{j})$ is a nontrivial homogeneous subset of $V(P)$ but this contradicts the fact 
that $P$ is prime. But then we conclude that $P$ is a subtournament of $B$ which obviously contradicts the definition of $B$.
That completes the proof.
\end{proof}

Now notice that the size of $F_{i+1}$ is exactly $|B|$ times the size of $F_{i}$ and the size of the largest transitive subtournament of $F_{i+1}$
is exactly $tr(B)$ times the size of the largest transitive subtournament of $F_{i}$ for $i=0,1,...$, where $tr(B)$ stands for the size of the largest transitive subset of $B$.
That immediately leads to the conclusion that the size of the largest transitive subtournament of $F_{i}$ is of order $|F_{i}|^{\frac{\log(tr(B))}{\log(|B|)}}$.
The last expression, by the definition of $B$, is of order $|F_{i}|^{\frac{c\log(k)}{k}}$. Therefore $\mathcal{F}$ is the family we were looking for.

\section{Conclusions}


One can easily notice that our methods can be extended for larger classes of forbidden tournaments, for instance tournaments with the ordering of vertices under which the graph of backward 
edges is a matching. It would be interesting to characterize all classes of tournaments for which presented method (or its minor modifications) works.
The approximation ratio of the proposed algorithm may be in practice much better. This is another interesting direction that could be explored.

\section*{Acknowledgements} 
I would like to thank Dr. Marcin Pilipczuk for useful conversation about this work and very helpful suggestions regarding the manuscript. 


%
%

%

\end{document}